\numberwithin{equation}{section}
\theoremstyle{plain}
	\newtheorem*{thm*}			{Theorem}
	\newtheorem*{conj*}			{Conjecture}
	\newtheorem*{cor*}			{Corollary}
	\newtheorem*{lem*}			{Lemma}
	\newtheorem*{prob*}			{Problem}
	\newtheorem*{prop*}			{Proposition}
\theoremstyle{definition}
	\newtheorem*{df*}			{Definition}
	\newtheorem*{ex*}			{Example}
	\newtheorem*{xc*}			{Exercise}
\theoremstyle{plain}
	\newtheorem{thm}				{Theorem}[section]
	\newtheorem{conj}				[thm]{Conjecture}
	\newtheorem{cor}				[thm]{Corollary}
	\newtheorem{prop}			[thm]{Proposition}
    \newtheorem{rem}            [thm]{Remark}
\theoremstyle{definition}
\theoremstyle{definition}
		\newtheorem*{rem*}		{Remark}
\newcommand{\define}[5]{\expandafter#1\csname#5#2\endcsname{#3{#4{#5}}}}
\DeclareMathOperator\UL{U}
\let\Re\relax\DeclareMathOperator\Re 		{Re}
\newcommand{\dfeq}				{\coloneqq}
\newcommand{\nn}					{\nonumber\\}
	\newcommand{\subeq}			{\subseteq}
\newcommand{\ud}					{\mathrm{d}}
\newcommand{\pa}[1]				{\left(			#1			\right)}
\renewcommand{\bar}[1]				{\overline{#1}}
\newcommand{\osc}{\mathrm{osc}}
\renewcommand{\hat}[1]			{\widehat{#1}}
\newcommand{\be}{\begin{equation}}
	\newcommand{\ee}{\end{equation}}
\newcommand{\bea}{\begin{eqnarray}}
	\newcommand{\eea}{\end{eqnarray}}
\newcommand{\ben}{\begin{enumerate}}
	\newcommand{\een}{\end{enumerate}}
\newcommand{\bi}{\begin{itemize}}
	\newcommand{\ei}{\end{itemize}}
\newcommand{\even}{\mathrm{even}}
\newcommand{\burl}[1]{\textcolor{blue}{\url{#1}}}
\newcommand{\integral}[1]{\int_{-#1}^{+#1}}
\renewcommand{\geq}{\geqslant}
	\renewcommand{\leq}{\leqslant}
\renewcommand{\mod}[1]
{
	{ 	\ifmmode\text{\rm\ (mod~$#1$)}\else
		\discretionary{}{}{\hbox{ }}\rm(mod~$#1$)\fi
	}
}
\DeclareMathOperator\Var{Var}
\begin{document}


\title[Zeros of Dirichlet $L$-Functions over Function Fields]
{ 	Zeros of Dirichlet $L$-Functions over Function Fields
}
\date{\today}	
\subjclass[2010]{14G10, 11M38, 11M50.}
	\keywords{Random matrix theory, $n$-level density, low-lying zeros, function field}
\thanks{This research took place at the 2013 SMALL REU at Williams College. The authors were supported by the National Science Foundation, under grant DMS0850577. The first named author was also supported by a postdoctoral fellowship from IH\'ES and an EPSRC William Hodge Fellowship, and the second named author by NSF grant DMS1265673. The authors would like to thank Jon Keating and Ze\'{e}v Rudnick for helpful comments on an earlier draft.}

\begin{abstract}
Random matrix theory has successfully modeled many systems in physics and mathematics, and often the analysis and results in one area guide development in the other. Hughes and Rudnick computed $1$-level density statistics for low-lying zeros of the family of primitive Dirichlet $L$-functions of fixed prime conductor $Q$, as $Q  \to \infty$, and verified the unitary symmetry predicted by random matrix theory. We compute $1$- and $2$-level statistics of the analogous family of Dirichlet $L$-functions over $\mathbb{F}_q(T)$. Whereas the Hughes-Rudnick results were restricted by the support of the Fourier transform of their test function, our test function is periodic and our results are only restricted by a decay condition on its Fourier coefficients. We show the main terms agree with unitary symmetry, and also isolate error terms. In concluding, we discuss  an $\F_q(T)$-analogue of Montgomery's Hypothesis on the distribution of primes in arithmetic progressions, which Fiorilli and Miller show would remove the restriction on the Hughes-Rudnick results.
\end{abstract}


\author{Julio C. Andrade}
\address{Institut des Hautes \'{E}tudes Scientifiques (IH\'{E}S), Bures-sur-Yvette, France}
\email{\href{mailto:j.c.andrade@ihes.fr}{\textcolor{blue}{j.c.andrade@ihes.fr}}}

\author{Steven J. Miller}
\address{Department of Mathematics and Statistics, Williams College,
Williamstown, MA 01267}
\email{\href{mailto:sjm1@williams.edu}{\textcolor{blue}{sjm1@williams.edu}}, \href{mailto:Steven.Miller.MC.96@aya.yale.edu}{\textcolor{blue}{Steven.Miller.MC.96@aya.yale.edu}}}

\author{Kyle Pratt}
\address{Department of Mathematics, Brigham Young University, Provo, UT 84602}
\email{\href{mailto:kyle.pratt@byu.net}{\textcolor{blue}{kyle.pratt@byu.net}}, \href{mailto:kvpratt@gmail.com}{\textcolor{blue}{kvpratt@gmail.com}}}

\author{Minh-Tam Trinh}
\address{Department of Mathematics, Princeton University, Princeton, NJ 08544}
\email{\href{mailto:mtrinh@princeton.edu}{\textcolor{blue}{mtrinh@princeton.edu}}, \href{mailto:mqtrinh@gmail.com}{\textcolor{blue}{mqtrinh@gmail.com}}}

\maketitle	
\setcounter{tocdepth}{1}
\tableofcontents


\section{Introduction}\label{sec:intro}

\subsection{Background}\label{subsec:background}

In the 1970s, Montgomery and Dyson conjectured that local statistics of critical zeros of the Riemann zeta function---in the limit of large height---should match those of angles of eigenvalues of matrices in the Gaussian Unitary Ensemble (GUE), which Wigner, Dyson and others (see \cite{FirM} for a historical overview) had already used with great success in modeling the energy levels of heavy nuclei. Their ideas, exemplified by the Pair Correlation Conjecture in \cite{Mo2}, began a long history of investigation into connections between number theory, physics and random matrix theory, which continue strong today (see for example \cite{CFZ1,CFZ2,CS}).

There have been many investigations, theoretical and numerical, on the zeros of the Riemann zeta function in different regimes. For example, Odlyzko\cite{Od1,Od2} checked various statistics of critical zeros of the Riemann zeta function high up on the critical line numerically, including pair correlation, and found extraordinary agreement with GUE predictions, and Berry \cite{Ber1,Ber2} made semi-classical predictions for these zeros in different ranges, again obtaining beautiful fits. While the leading order asymptotics of the zero statistics and the eigenvalues statistics are identical and asymptotically no factors of arithmetical nature appear, the work of Bogomolny and Keating \cite{BogKea1} has identified lower order terms in the pair correlation of the zeros of the Riemann zeta function. From their heuristics is clear that arithmetical contributions play a role in lower order terms. The same can be said about the two--point correlation function for Dirichlet L-functions \cite{BogKea2}. Katz and Sarnak extended this philosophy to families of $L$-functions in \cite{KS1,KS2}. They proposed that zeros of $L$-functions in suitable ``families'' would have similar statistics to each other, and that the statistics of a given family, in the limit of large analytic conductor, would match those of eigenangles of matrices in some classical compact group under Haar measure, in the limit of large dimension. Thus families of $L$-functions would correspond to one of three basic symmetry types: unitary, symplectic, or orthogonal. For recent discussions about a working definition of families of $L$--functions, as well as how to determine the underlying symmetry, see \cite{DM,ST,SST}.

Originally the Katz-Sarnak Conjectures were investigated in what we will call the \emph{local regime near the central point $s = 1/2$}; that is, in intervals around $s = 1/2$ shrinking as the conductor grows, so that the number of zeros it contains is roughly constant (see, among many others, \cite{ILS,HR,Mil1,Rub,Yo} for some of the earlier results in the field). In this regime the conjectures are very difficult, and most results are limited to test functions whose Fourier transforms have severely restricted support. Moreover, it is necessary to average over a ``family,'' as one $L$-function cannot have sufficiently many normalized zeros near the central point. This is in sharp contrast to other statistics such as the $n$-level correlation; these statistics study zeros high up on the critical line, and one $L$-function has sufficiently many zeros far from the central point to permit an averaging. For example, Rudnick and Sarnak \cite{RS} computed the $n$-level correlation for the zeroes of not just the Riemann zeta function but any cuspidal automorphic form for a restricted class of test functions, proving their expression agrees with the $n$-level correlation for the eigenvalues of random unitary matrices.

Our own work extends \cite{HR}, in which Hughes and Rudnick compute the $m$\textsuperscript{th} centered moment of the $1$-level density of the family of primitive Dirichlet $L$-functions of fixed conductor $Q$ as $Q \to \infty$, for test functions $\phi$ such that $\supp(\hat{\phi})  \subeq (-2/m, 2/m)$. This family should have unitary symmetry, which the authors verified for suitably restricted test functions. We consider analogous questions in the function field case. There has been significant progress in this area of late (see \cite{FR,Rud} among others); we briefly comment on one particular example which illuminates the contributions that function field results can have to random matrix theory and mathematical physics.

In his thesis Rubinstein \cite{Rub} showed the $n$-level density of quadratic Dirichlet $L$-functions agrees with the random matrix theory prediction of symplectic symmetry for support in $(-1/n, 1/n)$. In the course of his investigations he analyzed the combinatorial expansions for the $n$-level densities of the classical compact groups, though he only  needed the results for restricted test functions due to the limitations on the number theory calculations. Gao \cite{Gao} doubled the support on the number theory side in his thesis, but due to the complexity of the combinatorics was only able to show the two computed quantities agreed for $n \le 3$. (This is not the first time there has been difficulty comparing number theory and random matrix theory; see also Hughes and Miller \cite{HM}, where they derive an alternative to the determinant expansions from Katz-Sarnak \cite{KS1,KS2} which is more amenable for comparing $n$-level densities with support restricted as in the results in the literature.) Levinson and Miller \cite{LM} devised a new approach which allowed them to show agreement for $n\le 7$; unlike the ad-hoc method of Gao, they developed a canonical formulation of the quantities and reduced the general case to a combinatorial identity involving Fourier transforms. The work of Entin, Roditty-Gershon and Rudnick \cite{ER-GR} \emph{bypasses} these extremely difficulty calculations and obstructions; they are able to show agreement between number theory and random matrix theory by showing the number theory answer agrees with the answer of a related problem involving function fields, which are known to agree with random matrix theory by different techniques. In particular, their function field results prove the combinatorial identity and provide a new path through comparisons with random matrix theory.

Katz and Sarnak suggested that a possible motivation for their conjectures is the analogy between number fields and global function fields.
The Riemann zeta function and Dirichlet $L$-functions can be considered $L$-functions ``over $\Q$''; they possess analogues ``over $\F_q(T)$,'' which occur as factors of numerators of zeta functions of projective curves over $\F_q$. As proven by Deligne \cite{De}, the zeros of the latter have a spectral interpretation, as reciprocals of eigenvalues of the Frobenius endomorphism acting on $\ell$-adic cohomology. Katz-Sarnak \cite{KS1} proved agreement with GUE $n$-level correlation \emph{unconditionally} for the family of isomorphism classes of curves of genus $g$ over $\F_q$, in the limit as both $g, q \to \infty$. Their main tool was Deligne's result that the Frobenius conjugacy classes become equidistributed in the family's monodromy group as $q \to \infty$.

Recently, there has been interest in the ``opposite limit,'' where $q$ is held fixed and $g \to \infty$. In \cite{FR}, the authors considered zeros of zeta functions of hyperelliptic curves of genus $g$ over $\F_q$. Instead of looking at zeros in the \emph{local} regime, they look at zeros in (1) \emph{global} and (2) \emph{mesoscopic} regimes: that is, in intervals $\ical$ around $0$ such that either (1) $|\ical|$ is fixed, or (2) $|\ical| \to 0$ but $g|\ical| \to \infty$. In both regimes, they show that the zeros become equidistributed in $\ical$ as $g \to \infty$, and the  normalized fluctuations in the number of the zeros are Gaussian. Xiong \cite{Xi} extended their work to families of $\ell$-fold covers of $\pbb^1(\F_q)$, for prime $\ell$ such that $q \equiv 1\pmod{\ell}$, again obtaining Gaussian behavior. For other works in this direction see \cite{BDFL1, BDFL2, BDFLS}.

By making use of the ratios conjecture adapted to the function field setting Andrade and Keating \cite{AK} have derived the one-level density of the zeros of the family of quadratic Dirichlet $L$--functions over $\mathbb{F}_{q}(T)$ as $q$ is fixed and $g\rightarrow\infty$ with no restriction on the test function. Also in a recent paper Roditty--Gershon \cite{RG} have computed the averages of products of traces of high powers of the Frobenius class for the hyperelliptic ensemble and as consequence she was able to compute the $n$--level density (with restriction on the test function) of the zeros of quadratic Dirichlet $L$--functions over $\mathbb{F}_{q}(T)$. These results are similar as those obtained in this paper with the difference that in this paper we are using $L$--functions in a unitary family and the statistics that are being considered are somehow different (see next section for more details).

\subsection{Outline}\label{subsec:outline}

We study the $\F_q(T)$-analogue of the Dirichlet $L$-function family of \cite{HR} (we review their definitions and properties in \S\ref{sec:dirichletprelims}). Specifically, we compute $1$-and $2$-level statistics of its zeros in what we call \emph{the global regime} (we discuss this in greater detail shortly), and then show how they imply statistics (such as the $n$-level densities) in the local regime agree with random matrix theory predictions.  At the cost of significantly more laborious calculations, one could calculate higher order statistics using our techniques.

In this introduction we only state the results in the global regime; we save for later sections the full statements of the local results, as these require further notation (which many readers are likely to be familiar with) to state, and content ourselves with remarking that we can prove agreement with random matrix theory predictions. Whereas the Hughes-Rudnick results were restricted by the support of $\hat{\phi}$, our global test function $\psi$ is periodic and our results are only restricted by a decay condition on the Fourier coefficients $\hat{\psi}(n)$.

In what follows, let $Q \in \F_q[T]$ be of degree $d \geq 2$.
Let $\fcal_Q$ be the family of primitive Dirichlet characters $\chi : \F_q[T] \to \C$ of modulus $Q$, and let $\fcal_Q^\even$ be the subfamily of even characters in $\fcal_Q$.

\subsubsection{$1$-Level Statistics}

A \emph{($1$-dimensional) test function of period $1$} is a holomorphic Fourier series $\psi(s) = \sum_{n \in \Z} \hat{\psi}(n)e(ns)$.
The average or expectation of a function $F: \fcal_Q \to \C$ is
\begin{align}
\ebb F \ = \  \frac{1}{\# \fcal_Q}\sum_{\chi \in \fcal_Q} F(\chi);
\end{align}
this sum is well-defined as there are only finitely many $\chi \in \fcal_Q$. With these definitions, we set
\begin{align}\label{eq_Fsub1chi}
F_{1, \chi}(\psi)
&\ \dfeq\ 			\frac{1}{d - 1}
					\sum_{-\frac{T_q}{2} \le \gamma_\chi < \frac{T_q}{2}}
					\psi\pa{\frac{\gamma_\chi}{T_q}},
\end{align}					
where $\gamma_\chi$ runs through the ordinates of the zeros $1/2 + i\gamma_\chi$ of $L(s, \chi)$, and
\begin{equation}
T_q \ := \ \frac{2\pi}{\log q}.
\end{equation}
Note that \eqref{eq_Fsub1chi} is \emph{not} the standard 1-level density as we are not taking a Schwartz test function of rapid decay. Indeed \eqref{eq_Fsub1chi} is essentialy a Weyl sum. We refer to this and related quantities as \emph{global statistics}, and say we are investigating the zeros in the \emph{global regime}. This is in contrast to the \emph{local statistics} in the \emph{local regime} (which are the zeros near the central point). We briefly explain our choice of notation. We choose to refer to quantities such as \eqref{eq_Fsub1chi} as global statistics as all the zeros can contribute on the same order. Notice, though, that this is not the same as other statistics such as the density of states. The reason is that we are renormalizing the zeros \emph{not} by the correct global quantity (which would be the average spacing between all $d$ zeros), but rather by the correct scaling factor for zeros near the central point. Thus this statistic is a bit of a hybrid, and allows for the exploration of certain statistics with $q$ growing with the degree.

The reason we choose to study these quantities is that we are able to immediately pass from a determination of these global statistics to the more standard local statistics such as the $1$-level density; in particular, we show that the $1$-level density of our family agrees with the scaling limit of unitary matrices as $d\to\infty$ and isolate lower order terms. See \S\ref{sec:onelevelglobal} for the full statement and proof.

\begin{thm}\label{thm:1levelexp}
Suppose $Q$ is irreducible of degree $d \geq 2$.
Let $\psi$ be a test function of period $1$ such that
\begin{align}\label{eq_4.4}
					C(\psi)
&\ = \ 				\sum_{n \in \Z}
					|\hat{\psi}(n)|q^{|n|/2}
\end{align}
converges.
Then
\begin{align}\label{eq_4.5}
					\ebb F_{1, \chi}(\psi)
&\ = \ 				\hat{\psi}(0)
					-		\frac{1}{(d - 1)(q - 1)}
							\sum_{n \in \Z}
							\frac{\hat{\psi}(n)}{q^{|n|/2}}
					+		O\pa{
							\frac{C(\psi)}{dq^d}}.
\end{align}
\end{thm}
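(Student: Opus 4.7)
The strategy will be a Fourier-side explicit formula for each $\chi$, followed by averaging over $\fcal_Q$ using orthogonality of primitive characters. Since $\psi(s) = \sum_n \hat\psi(n) e(ns)$ and $e(\gamma_\chi/T_q) = e^{i \gamma_\chi \log q}$, I first unfold
\[
F_{1, \chi}(\psi) \ = \ \frac{1}{d-1}\sum_{n \in \Z}\hat\psi(n)\sum_{j} e^{i n \theta_j(\chi)},
\]
where $\theta_j(\chi) := \gamma_j(\chi)\log q$ runs over the angles of the critical zeros of $L(s,\chi)$. I will then invoke the factorization $L(s,\chi) = \prod_{j}(1-\alpha_j q^{-s})$, with $|\alpha_j| = q^{1/2}$ for each non-trivial zero and one trivial root $\alpha_1 = 1$ when $\chi$ is primitive even, and apply the Newton/logarithmic-derivative identity
\[
\sum_j \alpha_j^n \ = \ -\, c_n(\chi), \qquad c_n(\chi) \ := \ \sum_{\deg g = n}\Lambda(g)\chi(g), \quad n\geq 1,
\]
coming from the Euler product of $L(s,\chi)$.

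Peeling off the trivial zero in the even case and pairing $n > 0$ with $n < 0$ via complex conjugation converts the display above into the explicit formula
\[
F_{1,\chi}(\psi) \ = \ \hat\psi(0) \ - \ \frac{E_\chi}{d-1}\sum_{n \in \Z}\frac{\hat\psi(n)}{q^{|n|/2}} \ - \ \frac{1}{d-1}\sum_{n \neq 0}\frac{\hat\psi(n)}{q^{|n|/2}}\, c_{|n|}\!\left(\chi^{\sgn(n)}\right),
\]
where $E_\chi \in \{0,1\}$ records whether $\chi$ is even. Taking expectations, the middle ``trivial-zero'' sum produces $-\,\tfrac{P_\even}{d-1}\sum_n \hat\psi(n)/q^{|n|/2}$, where counting primitives modulo $Q$ yields
\[
P_\even \ = \ \frac{|\fcal_Q^\even|}{|\fcal_Q|} \ = \ \frac{q^d - q}{(q-1)(q^d - 2)}, \qquad \left|P_\even - \frac{1}{q-1}\right| \ \leq \ \frac{1}{q^d - 2}.
\]
Replacing $P_\even$ by $1/(q-1)$ then gives the stated main term up to an error absorbed in $O(C(\psi)/(dq^d))$.

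For the remaining ``Weil'' sum, I will use orthogonality for primitive characters mod the irreducible $Q$: with Iverson brackets, $\sum_{\chi \in \fcal_Q}\chi(g) = (q^d - 1)\,[g \equiv 1 \pmod Q]\, -\, [Q \nmid g]$. For monic $g$ of degree $1 \leq m < d$, no such $g$ is $\equiv 1\pmod{Q}$, so $\ebb \, c_m(\chi) = -q^m/(q^d-2)$ exactly, and the contribution from $0 < |n| < d$ is bounded by $C(\psi)/((d-1)(q^d-2))$, within the stated error. For $m \geq d$, the prime polynomial theorem in arithmetic progressions, $\psi(m; Q, 1) = q^m/\phi(Q) + O((d-1) q^{m/2})$ (obtained from the Weil bound $|c_m(\chi)| \leq (d-1)q^{m/2}$ applied character-by-character), causes the main pieces of $\phi(Q)\psi(m;Q,1)$ and of the $\chi_0$ contribution to cancel, leaving only a square-root-size residual.

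The main obstacle will be packaging the $|n| \geq d$ tail of the Weil sum into the claimed error $O(C(\psi)/(dq^d))$. A per-character application of Weil gives only the weaker bound $O(C(\psi)/q^{d/2})$; closing the gap requires exploiting both the cancellation between $\phi(Q)\psi(m;Q,1)$ and the principal-character contribution at each $m \geq d$ and the tail decay of $\hat\psi(n)$ forced by convergence of $C(\psi) = \sum_n|\hat\psi(n)|q^{|n|/2}$. This final bookkeeping, which requires handling the contributions at different degree-ranges separately and summing against the rapidly decaying coefficients $\hat\psi(n)$, is the most delicate part of the argument.
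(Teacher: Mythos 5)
Your setup (the Fourier-side explicit formula, the count of even primitive characters, and the exact orthogonality treatment of the range $0<|n|<d$) matches the paper's argument, but the final step is a genuine gap, and you have in fact diagnosed it yourself: the stated error term $O\!\pa{C(\psi)/(dq^d)}$ is \emph{not} reachable by the route you propose for $|n|\geq d$. The ``main-piece cancellation'' between $\Phi_q(Q)\Psi_q(m;Q,1)$ and the $\chi_0$-contribution only removes the terms of size $q^m$; what survives is the error in the prime polynomial theorem in arithmetic progressions, which per character is genuinely of size $dq^{m/2}$ (the Weil bound is sharp here), and there is no further cancellation to be extracted from a single residue class $g\equiv 1\pmod{Q}$ without new arithmetic input. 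Summing $dq^{m/2}\cdot|\hat{\psi}(\pm m)|q^{-m/2}$ over $m\geq d$ against the decay forced by convergence of $C(\psi)$ gives exactly the $O(C(\psi)/q^{d/2})$ you concede, and no amount of ``bookkeeping'' improves $q^{-d/2}$ to $q^{-d}$.

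The missing idea is the function-field Brun--Titchmarsh theorem of Hsu, which the paper invokes: there is an absolute constant $C>0$ with
\begin{align}
{\sideset{}{^{'}}\sum_{\substack{\deg f = n \\ f\equiv 1\pmod{Q}}}}\Lambda(f)\ \le\ Cq^{n-d}
\end{align}
uniformly in $n$ and $Q$. This is a sieve-type \emph{upper bound of the correct order of magnitude} $q^n/|Q|$, requiring no cancellation at all; feeding it into the $f\equiv 1$ term gives a contribution $\ll \frac{1}{(d-1)q^d}\sum_n |\hat{\psi}(n)|q^{|n|/2} = O(C(\psi)/(dq^d))$ in one line, and the $-1/\#\fcal_Q$ piece is handled by the exact identity $\sum'_{\deg f=n}\Lambda(f)=q^n$ to the same order. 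With Brun--Titchmarsh replacing your $m\geq d$ analysis (and applied uniformly, so the case split at $m=d$ is unnecessary), your argument closes; without it, you prove the theorem only with the weaker error $O(C(\psi)/q^{d/2})$, which loses precisely the savings that the paper's later discussion of Montgomery-type hypotheses is calibrated against.
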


\begin{rem}
Note that the theorem above proves that the eigenangles are uniformly distributed $\bmod \ 1$
\end{rem}

The variance of a function $F : \fcal_Q \to \C$ is
\begin{equation}
					\Var F
\ := \ 				\ebb|F - \ebb F|^2
	\ = \ 				\ebb |F|^2 - |\ebb F|^2.
					\nonumber
\end{equation}
Our second result concerns the variance of the $F_{1,\chi}$.

\begin{thm}\label{thm:1levelvar}
Suppose $Q$ is irreducible of degree $d \geq 2$.
Let $\psi$ be a test function of period $1$ such that $C(\psi)$ converges, where $C(\psi)$ is defined in Theorem \ref{thm:1levelexp}.
Then
\begin{align}
					\Var F_{1, \chi}(\psi)
&\ = \ 				\frac{1}{(d - 1)^2}
					\sum_{n \in \Z}
					|n||\hat{\psi}(n)|^2
					+		O\pa{
							\frac{C(\psi)^2}{d^2q^d}
							}.
\end{align}
\end{thm}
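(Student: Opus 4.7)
The plan is to mirror the proof of Theorem~\ref{thm:1levelexp} at the level of the second moment, using the explicit formula for function field $L$-functions together with orthogonality of primitive characters mod $Q$. Writing the completed $L$-function as $L(u,\chi)=\prod_{j=1}^{d-1}(1-\sqrt{q}\,e^{i\theta_j(\chi)}u)$, logarithmic differentiation yields the standard identity
\[
T_n(\chi)\;\dfeq\;\sum_{j=1}^{d-1}e^{in\theta_j(\chi)}\;=\;-q^{-|n|/2}\!\!\sum_{\deg f=|n|}\Lambda(f)\,\chi^{\sgn(n)}(f),\qquad n\neq 0,
\]
where $\chi^+=\chi$ and $\chi^-=\bar{\chi}$. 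Exactly as in Theorem~\ref{thm:1levelexp} one then has $F_{1,\chi}(\psi)-\hat\psi(0)=(d-1)^{-1}\sum_{n\neq 0}\hat\psi(n)\,T_n(\chi)$, so
\[
\Var F_{1,\chi}(\psi)\;=\;\frac{1}{(d-1)^2}\sum_{m,n\neq 0}\hat\psi(m)\overline{\hat\psi(n)}\,\mathrm{Cov}\bigl(T_m(\chi),\,T_n(\chi)\bigr),
\]
and the task reduces to evaluating these covariances.

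Unfolding the trace sums and applying the primitive-character orthogonality relation
\[
\ebb_{\chi\in\fcal_Q}\chi(f)\overline{\chi(g)}\;=\;\frac{q^d-1}{q^d-2}\,\mathbf{1}\bigl[f\equiv g\pmod{Q}\bigr]\;-\;\frac{\mathbf{1}\bigl[(fg,Q)=1\bigr]}{q^d-2},
\]
which holds because $Q$ is irreducible, reveals the following structure. Same-sign index pairs $(m,n)$ produce the diagonal $f\equiv g\pmod{Q}$, while opposite-sign pairs produce $fg\equiv 1\pmod{Q}$, which has no solutions of positive degree when $|m|+|n|<d$. For $|m|,|n|<d$ the same-sign diagonal collapses to $f=g$, and the prime polynomial theorem in $\F_q[T]$ gives
\[
\frac{1}{q^{|n|}}\sum_{\deg f=|n|}\Lambda(f)^2\;=\;|n|\;+\;O\bigl(|n|\,q^{-|n|/2}\bigr),
\]
since the degree-$|n|$ irreducibles dominate and higher prime powers contribute only $O(|n|q^{|n|/2})$. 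Summed against $|\hat\psi(n)|^2$, this produces the main term $(d-1)^{-2}\sum_{n}|n|\,|\hat\psi(n)|^2$.

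The error assembles three pieces: (i) the $-1/(q^d-2)$ correction in orthogonality, which after cancellation with $\ebb T_m\,\ebb\overline{T_n}$ yields a covariance contribution of size $q^{(|m|+|n|)/2-d}$ and, once multiplied by $|\hat\psi(m)\hat\psi(n)|$ and summed, gives exactly $C(\psi)^2/((d-1)^2 q^d)$; (ii) the subleading prime-power contributions to $\sum\Lambda(f)^2$, which are smaller; and (iii) the tail $|m|,|n|\geq d$, where $f\equiv g\pmod{Q}$ admits many solutions. The principal obstacle is (iii): the trivial bound $|T_n(\chi)|\leq d-1$ alone yields only $O(C(\psi)^2/q^d)$, so to recover the claimed factor $1/d^2$ one must refine via the Cauchy--Schwarz estimate $|\mathrm{Cov}(T_m,T_n)|\leq\sqrt{\ebb|T_m|^2\,\ebb|T_n|^2}$, with the second moments controlled by $\min(|n|,d-1)$ (the function-field analogue of Diaconis--Shahshahani), combined with the rapid decay $|\hat\psi(n)|\ll q^{-|n|/2}$ forced by convergence of $C(\psi)$.
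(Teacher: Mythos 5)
Your skeleton---the trace identity from the explicit formula, Schur orthogonality splitting the second moment into a congruence diagonal $f\equiv g\pmod{Q}$ and an $fg\equiv 1\pmod{Q}$ piece, collapse to $f=g$ in degrees below $d$, and the prime polynomial theorem giving $\sum'_{\deg f=n}\Lambda(f)^2=n(q^n+O(q^{n/2}))$---is exactly the paper's, and your main term is correct. The genuine gap is your item (iii). The paper controls the off-diagonal congruence solutions (and the $fg\equiv1$ sums) for \emph{all} degrees at once with Hsu's function-field Brun--Titchmarsh theorem,
\begin{align}
{\sideset{}{^{'}}\sum_{\substack{\deg g=n\\ g\equiv a\pmod{Q}}}}\Lambda(g)\ \ll\ q^{n-d},
\end{align}
which bounds each $(n_1,n_2)$-term of the double sum by $q^{n_1+n_2-d}$; after dividing by $q^{(n_1+n_2)/2}$ and summing against $|\hat{\psi}(n_1)\hat{\psi}(n_2)|$ this is $C(\psi)^2/q^d$, and the $1/d^2$ comes for free from the outer normalization $1/(d-1)^2$---no case split and no Cauchy--Schwarz. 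Your substitute, $|\mathrm{Cov}(T_m,T_n)|\leq(\ebb|T_m|^2\,\ebb|T_n|^2)^{1/2}$ with $\ebb|T_n|^2\ll\min(|n|,d-1)$, is circular: that second-moment bound \emph{is} the theorem being proved, specialized to $\psi(s)=e(ns)$. Diaconis--Shahshahani is a statement about Haar measure on $\UL(d-1)$; transferring it to the Frobenius classes of $\fcal_Q$ at fixed $q$ is precisely the content of Theorem \ref{thm:1levelvar}, so it cannot be assumed. Moreover, even granting that input, Cauchy--Schwarz gives only $(\ebb|T_m|^2\ebb|T_n|^2)^{1/2}\ll d$ in the tail, recovering at best one factor of $d$ rather than the two needed for $O(C(\psi)^2/(d^2q^d))$.

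A secondary issue: the identity $F_{1,\chi}(\psi)-\hat{\psi}(0)=(d-1)^{-1}\sum_{n\neq0}\hat{\psi}(n)T_n(\chi)$ presumes $L(u,\chi)$ has exactly $d-1$ inverse roots of modulus $\sqrt{q}$, which holds only for odd $\chi$. For even $\chi$ the completed degree drops to $d-2$ and the explicit formula acquires the Gamma-factor term $-\lambda_\infty(\chi)(d-1)^{-1}\sum_{n}\hat{\psi}(n)q^{-|n|/2}$, whose fluctuation over the family (the paper isolates it as $C_{1,\Gamma}(\psi)(\lambda_\infty(\chi)-1/(q-1))$ before squaring) must be carried through the variance computation rather than absorbed silently into ``exactly as in Theorem \ref{thm:1levelexp}.''
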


\subsubsection{$2$-Level Statistics}\label{subsec:2levelstatinintro}

A \emph{$2$-dimensional test function of period $1$} is a bivariate Fourier series $\psi(s_1, s_2) = \psi_1(s_1)\psi_2(s_2) =  \sum_{n_1, n_2 \in \Z} \hat{\psi}_1(n_1)\hat{\psi}_2(n_2)e(n_1s_1 + n_2s_2)$.
We set
\begin{align}\label{eq_Fsub2chi}
					F_{2,\chi}(\psi)
\ = \ 					\frac{1}{(d - 1)^2}
					\sum_{\substack{-\frac{T_q}{2} \le \gamma_{\chi, 1}, \gamma_{\chi, 2} < \frac{T_q}{2} \\ \gamma_{\chi, 2} \neq \gamma_{\chi, 1}}}
					\psi\pa{\frac{\gamma_{\chi, 1}}{T_q}, \frac{\gamma_{\chi, 2}}{T_q}}.
\end{align}

Our third global result is the 2-level analogue of Theorem \ref{thm:1levelexp}, which will again confirm agreement with unitary symmetry. See \S\ref{sec:twolevelglobal} for the proof. As the effort of retaining lower-order terms becomes laborious here, and further these terms are not needed to determine the symmetry type, we do not attempt to compute an analogue of Theorem \ref{thm:1levelvar} or other results related to higher level moments.

\begin{thm}\label{thm:2levelexp}
Suppose $Q$ is irreducible of degree $d \geq 2$.
Let $\psi$ be a $2$-dimensional test function of period $1$ such that
\begin{align}
					C(\psi)
&\ = \ 				C(\psi_1)C(\psi_2)
\end{align}
converges, where $C(\psi)$ is defined for $1$-dimensional test functions $\phi$ of period $1$ in Theorem \ref{thm:1levelexp}.
Let $\psi_\diag(s) = \psi(s, s)$.
Then
\begin{align}
					\ebb F_{2, \chi}(\psi)
&\ = \ 				{-\ebb F_{1,\chi}(\psi_\diag) }
					+		\hat{\psi}(0, 0)
					+		\frac{1}{(d - 1)^2}
							\sum_{n \in \Z}
							|n|
							\hat{\psi}(n, -n)
					+		\frac{C_{2, \Gamma}(\psi)}{q - 1}
							\\
&\qquad
					+ 		O\pa{\frac{C(\psi_1) + C(\psi_2)}{d q^d}}
					+		O\pa{\frac{C(\psi)}{d^2 q^d}},
					\nonumber
\end{align}
where
\begin{align}
C_{2, \Gamma}(\psi)
\ = \  					{-\frac{1}{d - 1}}
					\sum_{n \in \Z}
					\frac{\hat{\psi}(0, n) + \hat{\psi}(n, 0)}{q^{|n|/2}}
					+
					\frac{1}{(d - 1)^2}
					\sum_{n_1, n_2 \in \Z}
					\frac{\hat{\psi}(n_1, n_2)}{q^{(|n_1| + |n_2|)/2}}.
\end{align}
\end{thm}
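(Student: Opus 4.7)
The natural approach is to exploit the product structure $\psi(s_1,s_2)=\psi_1(s_1)\psi_2(s_2)$ to reduce $F_{2,\chi}(\psi)$ to $1$-level sums. Adding and subtracting the diagonal $\gamma_{\chi,1}=\gamma_{\chi,2}$ in \eqref{eq_Fsub2chi} yields the elementary identity
\[
F_{2,\chi}(\psi)\ =\ F_{1,\chi}(\psi_1)\,F_{1,\chi}(\psi_2)\ -\ \tfrac{1}{d-1}F_{1,\chi}(\psi_\diag),
\]
so after taking expectations it suffices to analyze $\ebb[F_{1,\chi}(\psi_1)F_{1,\chi}(\psi_2)]$, since the diagonal piece is already handled by Theorem~\ref{thm:1levelexp} applied to $\psi_\diag$.

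For the quadratic expectation I expand each factor in its Fourier series and substitute the explicit formula
\[
\sum_{\gamma_\chi}e(n\gamma_\chi/T_q)\ =\ -\,q^{-|n|/2}\!\!\sum_{f\in M_{|n|}}\!\Lambda(f)\,\chi^{\sgn(n)}(f),\qquad n\neq 0,
\]
obtained from the logarithmic derivative of $L(u,\chi)=\prod_j(1-\alpha_j u)$. This expresses $F_{1,\chi}(\psi_1)F_{1,\chi}(\psi_2)$ as a double sum over $(n_1,n_2)\in\Z^2$ and pairs $(f_1,f_2)$ of prime powers, weighted by $\hat\psi(n_1,n_2)q^{-(|n_1|+|n_2|)/2}\Lambda(f_1)\Lambda(f_2)$ times $\chi^{\sgn(n_1)}(f_1)\chi^{\sgn(n_2)}(f_2)$. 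Orthogonality for primitive characters mod the irreducible $Q$ splits the expectation of each character product into a main piece, supported on $f_1^{\sgn(n_1)}f_2^{\sgn(n_2)}\equiv 1\pmod{Q}$ with weight $\phi(Q)/|\fcal_Q|$, plus a uniform correction $-1/|\fcal_Q|$. In the main range ($|n_1|,|n_2|<d$ for opposite signs, $|n_1|+|n_2|<d$ for equal signs) the congruence reduces to actual polynomial equality: opposite signs force $f_1=f_2$ and $|n_1|=|n_2|=n$, while equal signs would require $f_1f_2=1$, which has no prime-power solutions. Hence only the opposite-sign diagonal contributes, and $\sum_{f\in M_n}\Lambda(f)^2=nq^n+O(nq^{n/2})$ together with the $q^{-n}$ prefactor produces exactly $\frac{1}{(d-1)^2}\sum_n|n|\hat\psi(n,-n)$; the $\hat\psi(0,0)$ main term is simply the $(0,0)$ mode of the Fourier expansion.

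The uniform $-1/|\fcal_Q|$ correction, summed against $\hat\psi(n_1,n_2)q^{-(|n_1|+|n_2|)/2}$ and the closed form $\sum_{f\in M_{|n_i|}}\Lambda(f)=q^{|n_i|}$, combined with the cross contributions from pairs where exactly one of $n_1,n_2$ vanishes (these yield the $\sum_n(\hat\psi(0,n)+\hat\psi(n,0))/q^{|n|/2}$ piece of $C_{2,\Gamma}$), assembles into $C_{2,\Gamma}(\psi)/(q-1)$ by the same mechanism that produced the secondary term in Theorem~\ref{thm:1levelexp}: the factor $(q-1)^{-1}$ arises from $|\fcal_Q|=\phi(Q)-1$ with $\phi(Q)=q^d-1=(q-1)(1+q+\cdots+q^{d-1})$. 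The two error terms absorb (i) pairs $(n_1,n_2)$ outside the main range, where accidental nontrivial solutions of $f_1f_2^{\pm 1}\equiv 1\pmod{Q}$ may occur, together with the corresponding 1-dimensional cross pieces, and (ii) the $O(nq^{n/2})$ Weil-type remainder in $\sum\Lambda(f)^2$ coming from proper prime-power divisors. The main technical obstacle is bookkeeping across the four sign regimes for $(n_1,n_2)$, particularly tracking the transition from exact polynomial equality to mere congruence when $|n_1|+|n_2|\ge d$, and cleanly packaging the many small $-1/|\fcal_Q|$ contributions into the compact expression $C_{2,\Gamma}(\psi)$ without double-counting against the $F_{1,\chi}(\psi_\diag)$ already subtracted in the opening identity.
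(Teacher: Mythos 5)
Your opening reduction
\begin{align*}
F_{2,\chi}(\psi) \ = \ F_{1,\chi}(\psi_1)\,F_{1,\chi}(\psi_2) \ - \ \tfrac{1}{d-1}F_{1,\chi}(\psi_\diag)
\end{align*}
is a legitimate and arguably cleaner route than the paper's, which instead derives a two-variable explicit formula by shifting a double contour and applying the functional equation in each variable. Your treatment of the quadratic term---orthogonality splitting into a congruence piece plus a uniform $-1/\#\fcal_Q$ defect, the observation that only opposite-sign pairs with $f_1=f_2$ survive as a main term, and $\sum_{\deg f = n}\Lambda(f)^2 = n(q^n+O(q^{n/2}))$---does recover $\hat{\psi}(0,0)$ and $\frac{1}{(d-1)^2}\sum_n|n|\hat{\psi}(n,-n)$, in parallel with the paper's analysis of its $F^{2,2}_{2,\chi}$ terms via Brun--Titchmarsh (which you should invoke explicitly to control the congruent-but-unequal pairs, since $n_1,n_2$ range over all integers).

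The genuine gap is the source of $C_{2,\Gamma}(\psi)/(q-1)$. The one-level input you substitute, $\sum_\gamma e(n\gamma_\chi/T_q) = -q^{-|n|/2}c_{\chi^{\sgn n}}(|n|)$, omits the Gamma-factor term $-\lambda_\infty(\chi)q^{-|n|/2}$ of Proposition \ref{prop:3.1}, which is present for even characters. The term $C_{2,\Gamma}(\psi)/(q-1)$ arises \emph{entirely} from this omitted piece: the cross terms between $\hat{\psi}_j(0)$ and the Gamma term of the other factor give $-\frac{\lambda_\infty(\chi)}{d-1}\sum_n(\hat{\psi}(0,n)+\hat{\psi}(n,0))q^{-|n|/2}$, the product of the two Gamma terms gives $\frac{\lambda_\infty(\chi)}{(d-1)^2}\sum_{n_1,n_2}\hat{\psi}(n_1,n_2)q^{-(|n_1|+|n_2|)/2}$, and the coefficient $1/(q-1)$ is then $\ebb\,\lambda_\infty(\chi) = \#\fcal_Q^{\even}/\#\fcal_Q$, the proportion of even characters. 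The mechanism you propose instead---the $-1/\#\fcal_Q$ defect paired with $\sum_{\deg f = n}\Lambda(f) = q^n + O(q^{n/2})$ and the factorization $\Phi(Q) = (q-1)(1+q+\cdots+q^{d-1})$---cannot produce it: those contributions are of size $O(C(\psi)/(d^2q^d))$ and $O((C(\psi_1)+C(\psi_2))/(dq^d))$ and belong in the error terms, and no clean factor of $1/(q-1)$ survives from $\Phi(Q)/\#\fcal_Q$, which is $1+O(q^{-d})$. As written, your argument would simply miss the $C_{2,\Gamma}$ term. A smaller point to reconcile: your identity carries the diagonal with coefficient $\tfrac{1}{d-1}$, whereas the statement (and the paper's \eqref{eq_5.5}) records $-\ebb F_{1,\chi}(\psi_\diag)$ without that factor; you should address this $d-1$ discrepancy explicitly rather than leave the two normalizations unmatched.
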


\subsubsection{Local Regime}

In Section \ref{sec:thelocalregime} we explain how we can easily pass from the global to the local regime. Our global statements above imply local statements analogous to those of \cite{HR}---namely, the 1-level expectation, 1-level variance, and 2-level expectation at the central point--- agree with unitary symmetry. Instead of looking at moments we could compute the $1$- and $2$-level densities, and again obtain agreement only with unitary symmetry. Our results are more general and also include lower order terms. There are now several procedures to compute these lower order terms in many random matrix ensembles and systems in mathematical physics; see for example \cite{CFZ1,CFZ2,CS,GHK} for some methods, and \cite{Mil2} for an example where different families of elliptic curve $L$-functions have the same main term but different lower order terms due to differences in their arithmetic.

In Section \ref{sec:montgomeryhyp} we discuss  an $\F_q(T)$-analogue of Montgomery's Hypothesis about the distribution of primes in arithmetic progressions, which Fiorilli and Miller \cite{FioM} show would remove the restriction on the Hughes-Rudnick results. For additional examples on the interplay between conjectures on the distribution of primes and comparisons between zeros of $L$-functions, eigenvalues of random matrix ensembles and energy levels of heavy nuclei, see \cite{BerKea,Kea}.


\section{Dirichlet $L$-Function Preliminaries}\label{sec:dirichletprelims}

We always assume $q$ is a prime power below.
We write $\sum', \prod'$  to denote a sum or product restricted to monic polynomials in $\F_q[T]$, and $\sum_P, \prod_P$ to denote a sum or product over irreducibles in $\F_q[T]$.
If $f \in \F_q[T]$, then $|f|$ equals $0$ if $f = 0$ and $q^{\deg f}$ if $f \neq 0$.

Fix a nonconstant modulus $Q \in \F_q[T]$ of degree $d$, and consider Dirichlet characters $\chi : \F_q[T] \to \C$ of modulus $Q$.
To each nontrivial character $\chi$, one associates the $L$-function
\begin{align}\label{eq_2.1}
					L(s, \chi)
&\ \dfeq\ 			{\sideset{}{^{'}}\sum_f}
					\frac{\chi(f)}{|f|^s}
\ =\ 					\sum_{n = 0}^{d - 1}
					{\sideset{}{^{'}}\sum_{\deg f = n}} \chi(f)
					q^{-ns}.
\end{align}
We briefly review its properties, following Chapter 4 of \cite{Ro}.
It possesses the Euler product
\begin{align}									
					L(s, \chi)
\ =\ 				 	{\sideset{}{^{'}}\prod_P}
					\frac{1}{1 - \chi(P)|P|^{-s}}.
\end{align}
Taking logarithmic derivatives of both sides gives
\begin{align}\label{eq_2.3}
					\frac{L'}{L}(s, \chi)	
\ =\  					-(\log q)
					\sum_{n = 0}^\infty
					c_\chi(n)
					q^{-ns},
\end{align}
where $c_\chi(n) = {\sum_{\deg f = n}^{'}} \Lambda(f)\chi(f)$ and
\begin{align}\label{eq_2.4}
					\Lambda(f)
\ =\					\left\{
					\begin{array}{ll}
					\deg P		&\text{$f = P^\nu$ for some irreducible monic $P$ and $\nu \in \Z_+$}
									\\
					0				&\text{otherwise}
					\end{array}
					\right.
\end{align}
is the von Mangoldt function over $\F_q[T]$.

Since we wish to emphasize the analogy between these $L$-functions and number-field Dirichlet $L$-functions, we prefer to consider their zeros in the variable $s$ rather than $q^{-s}$. The Riemann Hypothesis, proved for these $L$-functions by Weil \cite{We2}, implies that the critical zeros of $L(s, \chi)$ live on the line $\Re s = 1/2$ and thus are \emph{vertically periodic} with period $2\pi/\log q$. Moreover, the Riemann Hypothesis implies that $c_\chi(n) \ll dq^{n/2}$ for all $\chi\neq \chi_0$  (see \cite{Ro}).


We consider the completed $L$-function (a good reference is Chapter 7 of \cite{We1}).
Suppose $\chi$ is primitive.
Then the completed $L$-function associated to $\chi$ is
\begin{align}\label{eq_2.5}
					\lcal(s, \chi)
\ = \  					\frac{1}{1 - \lambda_\infty(\chi)q^{-s}}
					L(s, \chi),
\end{align}
where $\lambda_\infty(\chi)$ equals $1$ if $\chi$ is even, meaning $\F_q^\times \subeq \ker \chi$, and $0$ if $\chi$ is odd.
The functional equation of $\lcal(s, \chi)$ is
\begin{align}\label{eq_2.6}
					\lcal(s, \chi)
\ = \ 					\epsilon(\chi)(q^{d(\chi)})^{1/2 - s}
					\lcal(1 - s, \bar{\chi}),
\end{align}
where $d(\chi) = d - 1 - \lambda_\infty(\chi)$ is the degree of $L(s, \chi)$ seen as a polynomial in the variable $q^{-s}$ and $\epsilon(\chi) \in S^1$ is some root number.
Translating (\ref{eq_2.5})-(\ref{eq_2.6}) into statements about the logarithmic derivatives gives
\begin{align}\label{eq_2.7}
					\dfrac{\lcal'}{\lcal}(s, \chi)
&\ = \     			\dfrac{L'}{L}(s, \chi)
					+		\dfrac{\lambda_\infty(\chi)\log q}{\lambda_\infty(\chi) - q^s}
\end{align}
and
\begin{align}\label{eq_2.8}
					\dfrac{\lcal'}{\lcal}(s, \chi)
&\ = \ 				-d(\chi)\log q
					-		\dfrac{\lcal'}{\lcal}(1 - s, \bar{\chi}).
\end{align}
Therefore, using the fact that $\lambda_\infty(\bar{\chi}) = \lambda_\infty(\chi)$, we find
\begin{align}\label{eq_2.9}
&					-\frac{L'}{L}(1 - s, \bar{\chi})
					\\
&\ = \ 				d(\chi) \log q
					+ 		\frac{L'}{L}(s, \chi)
					+ 		\lambda_\infty(\chi)
							\left(\dfrac{1}{\lambda_\infty(\chi) - q^s} + \dfrac{1}{\lambda_\infty(\chi) - q^{1 - s}}\right)\log q
							\nn
&\ = \ 				d(\chi) \log q
					+ 		\frac{L'}{L}(s, \chi)
					+ 		\lambda_\infty(\chi)
							\left(\dfrac{1}{1 - q^s} + \dfrac{1}{1 - q^{1 - s}}\right)\log q.			
							\nonumber
\end{align}

The following formula is essentially Lemma 2.2 of \cite{FR}.
We re-derive it in Appendix \ref{sec:proofprop3point1} in a way that facilitates comparison with classical Explicit Formulae, such as that of \cite{RS}.
To state the result, abbreviate
\begin{align}\label{eq_3.1}
T_q \ = \  \frac{2\pi}{\log q}.
\end{align} 

\begin{prop}[Explicit Formula]\label{prop:3.1}
Let $Q \in \F_q[T]$ be of degree $d \geq 2$, and let $\chi$ be a nontrivial Dirichlet character of modulus $Q$.
Let $\psi$ be a test function of period $1$.
Then
\begin{align}\label{eq_3.2}
					F_{1, \chi}(\psi)
&\ \dfeq\ 			\frac{1}{d - 1}
					\sum_{-\frac{T_q}{2} \le \gamma_\chi < \frac{T_q}{2}}
					\psi\pa{\frac{\gamma_\chi}{T_q}}
					\\
&\ = \ 				\hat{\psi}(0)
					-		\frac{\lambda_\infty(\chi)}{d - 1}
							\sum_{n \in \Z}
							\frac{\hat{\psi}(n)}{q^{|n|/2}}
					-		\frac{1}{d - 1}			
							\sum_{n = 0}^\infty	
							\frac{c_\chi(n)\hat{\psi}(n) + c_{\bar{\chi}}(n)\hat{\psi}(-n)}{q^{n/2}},
					\nonumber
\end{align}
where $\lambda_\infty(\chi)$ equals $1$ if $\chi$ is even and $0$ otherwise, and $c_\chi(n) = {\sum_{\deg f = n}^{'}} \Lambda(f)\chi(f)$.
\end{prop}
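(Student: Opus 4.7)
The plan is to Fourier-expand $\psi$ and reduce the identity to computing the ``zero power sums''
\[
S_\chi(n) \ \dfeq\ \sum_{-T_q/2 \leq \gamma_\chi < T_q/2} e(n \gamma_\chi/T_q).
\]
Substituting $\psi(\gamma/T_q) = \sum_{n \in \Z} \hat{\psi}(n)\, e(n\gamma/T_q)$ and interchanging sums (justified by the trivial bound $|S_\chi(n)| \leq d - 1$ together with the convergence $C(\psi) < \infty$) gives $(d - 1) F_{1,\chi}(\psi) = \sum_{n \in \Z} \hat{\psi}(n) S_\chi(n)$, so everything reduces to evaluating $S_\chi(n)$.

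For $n \geq 1$, I use the polynomial factorization $L(s,\chi) = (1 - \lambda_\infty(\chi) q^{-s}) \prod_j (1 - \alpha_j q^{-s})$, in which the $d(\chi) = d - 1 - \lambda_\infty(\chi)$ nontrivial inverse zeros satisfy $|\alpha_j| = q^{1/2}$ by the Riemann Hypothesis (Weil). Writing $\alpha_j = q^{1/2} e^{i \gamma_j \log q}$ and using $T_q = 2\pi/\log q$ identifies $e(n\gamma_j/T_q) = (\alpha_j/q^{1/2})^n$, so $S_\chi(n) = q^{-n/2} \sum_j \alpha_j^n$. Expanding $L'/L$ as a power series in $q^{-s}$ from the product factorization and matching coefficients against (\ref{eq_2.3}) then yields $\sum_j \alpha_j^n = -c_\chi(n) - \lambda_\infty(\chi)$, where the $\lambda_\infty$-term absorbs the contribution of the trivial inverse zero $\alpha = \lambda_\infty(\chi)$ coming from the completion factor. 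Hence $S_\chi(n) = q^{-n/2}(-c_\chi(n) - \lambda_\infty(\chi))$ for $n \geq 1$, while $S_\chi(0) = d(\chi)$ by counting. For $n \leq -1$, the identity $\overline{L(s, \chi)} = L(\bar s, \bar{\chi})$ forces the multiset of zero ordinates of $L(\cdot, \bar{\chi})$ in a period to be the negative of that of $L(\cdot, \chi)$, giving $S_\chi(-n) = S_{\bar{\chi}}(n)$ and hence the same formula with $c_{\bar{\chi}}(n)$ in place of $c_\chi(n)$ (using $\lambda_\infty(\bar{\chi}) = \lambda_\infty(\chi)$).

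Assembling the three cases into $(d - 1)^{-1} \sum_n \hat{\psi}(n) S_\chi(n)$, the $n = 0$ contribution $\hat{\psi}(0) d(\chi)/(d - 1)$ splits as $\hat{\psi}(0) - \lambda_\infty(\chi) \hat{\psi}(0)/(d - 1)$; merging this with the $\lambda_\infty$-terms arising from $n \neq 0$ produces the single symmetric sum $(\lambda_\infty(\chi)/(d - 1)) \sum_{n \in \Z} \hat{\psi}(n)/q^{|n|/2}$ appearing in (\ref{eq_3.2}), and extending the final sum in (\ref{eq_3.2}) to $n \geq 0$ is harmless since $c_\chi(0) = 0$. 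The only real obstacle is bookkeeping: one must carefully separate the trivial inverse zero $\alpha = \lambda_\infty(\chi)$ from the nontrivial ones and keep track of the hybrid normalization (division by $d - 1$ rather than by $d(\chi)$). Once one recognizes that the even/odd-character dichotomy enters only through the completion factor $(1 - \lambda_\infty(\chi) q^{-s})$, the same power-sum calculation handles both cases uniformly.
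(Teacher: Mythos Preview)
Your argument is correct, but it takes a genuinely different route from the paper's. The paper proves Proposition~\ref{prop:3.1} by the classical contour-integration method: it integrates $\tfrac{L'}{L}(s,\chi)\,\psi^\ast(-i(s-1/2))$ over the boundary of the strip $\tfrac12-\epsilon<\Re s<\tfrac12+\epsilon$, invokes the functional equation \eqref{eq_2.9} to convert the left edge into another Dirichlet-series integral plus an explicit ``Gamma-factor'' integral, and then recognizes each piece as a Fourier coefficient of $\psi$. The authors say explicitly that they choose this derivation to ``facilitate comparison with classical Explicit Formulae,'' i.e.\ with the number-field case.

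Your approach instead exploits the fact, special to function fields, that $L(s,\chi)$ is a \emph{polynomial} in $q^{-s}$: you Fourier-expand $\psi$, reduce to the Newton power sums $S_\chi(n)=\sum_j (\alpha_j/q^{1/2})^n$ over the nontrivial inverse zeros, and read these off by matching coefficients in the two expansions of $L'/L$. This is more elementary---no contours, no functional equation---and makes transparent why the $\lambda_\infty(\chi)$ term appears (it is exactly the power-sum contribution of the trivial inverse zero $\alpha=1$ in the even case). The paper's method, by contrast, makes the parallel with the Weil/Rudnick--Sarnak explicit formula visible and is the template for their later $2$-level computation in \S\ref{sec:twolevelglobal}. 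One small remark: the interchange of sums needs only $\sum_n|\hat\psi(n)|<\infty$ (via $|S_\chi(n)|\le d(\chi)$), which follows from $\psi$ being a holomorphic Fourier series; invoking the stronger hypothesis $C(\psi)<\infty$ is harmless but belongs to Theorem~\ref{thm:1levelexp}, not to Proposition~\ref{prop:3.1}.
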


\section{The $1$-Level Global Regime}\label{sec:onelevelglobal}

The computations in this section are closely based on those in \cite{HR}, after which we can easily prove corresponding results about the 1-level density and similar quantities.

\subsection{Expectation}


\begin{proof}[Proof of Theorem \ref{thm:1levelexp}]
By the Explicit Formula
\begin{align}\label{eq_4.2}
					\ebb F_{1, \chi}(\psi)
\ = \ 					\hat{\psi}(0)
					-		\frac{1}{d - 1}
							\cdot \frac{\# \fcal_Q^\even}{\# \fcal_Q}
							\sum_{n \in \Z}
							\frac{\hat{\psi}(n)}{q^{|n|/2}}
					+		 \ebb (F_{1, \chi}(\psi)^\osc),
\end{align}
where
\begin{align}\label{eq_4.3}
					F_{1, \chi}(\psi)^\osc
\ = \ 					-\frac{1}{d - 1}			
					\sum_{n = 0}^\infty	
					\frac{c_\chi(n)\hat{\psi}(n) + c_{\bar{\chi}}(n)\hat{\psi}(-n)}{q^{n/2}}.
\end{align}
Since $Q$ is monic and irreducible, the only imprimitive character modulo $Q$ is the principal character $\chi_0$.
In this case, there are $(|Q| - 1)/(q - 1)$ even characters including $\chi_0$, so we know $\#\fcal_Q^\even /\#\fcal_Q$ is roughly $1/(q - 1)$.
It remains to estimate $\ebb (F_{1, \chi}(\psi)^\osc)$. 

By Schur orthogonality,
\begin{align}\label{eq_4.6}
					\ebb \chi(f)
\ = \ 					\left\{
					\begin{array}{ll}	
					0						&f\equiv 			0\pmod{Q}\\					
					1						&f \equiv			1\pmod{Q}\\
					-1/\#\fcal_Q	&\text{otherwise}
					\end{array}
					\right.
\end{align}
for all $f \in \F_q[T]$, and similarly with $\ebb \bar{\chi}(f)$.
Therefore
\begin{align}\label{eq_4.7}
&					\ebb(F_{1, \chi}(\psi)^\osc)
					\\
&\ = \ 				-\frac{1}{d - 1}
					\sum_{n = 0}^\infty
					\left(	
					{\sideset{}{^{'}}\sum_{\substack{\deg f = n \\ f \equiv 1\pmod{Q}}}}
					-		\frac{1}{\#\fcal_Q}
							{\sideset{}{^{'}}\sum_{\substack{\deg f = n \\  f \not\equiv 0,1\pmod{Q}}}}
					\right)
					\Lambda(f)
					\frac{\hat{\psi}(n) + \hat{\psi}(-n)}{q^{n/2}}.
					\nonumber
\end{align}
To estimate the contribution of the first term in the big parenthesized expression above we make use of the function-field analogue of the Brun-Titchmarsh Theorem (see \cite{Hsu}), which states that
\begin{align}
					{\sideset{}{^{'}}\sum_{\substack{\deg f = n \\ f\equiv 1\pmod{Q}}}}
					\Lambda(f)
&\ \le \  			Cq^{n - d}
\end{align}
for some $C > 0$ independent of $Q, n$.
Thus the contribution from this term is
\begin{align}
					-\frac{1}{(d - 1)}
					\sum_{n = 0}^\infty
					{\sideset{}{^{'}}\sum_{\substack{\deg f = n \\ f\equiv 1\pmod{Q}}}}
					\Lambda(f)
					\frac{\hat{\psi}(n) + \hat{\psi}(-n)}{q^{n/2}}					
&\ \ll \ 				\frac{1}{dq^d}
					\sum_{n \in \Z}
					\hat{\psi}(n)
					q^{|n|/2}.
\end{align}
On the other hand, by the Prime Number Theorem in this setting (see \cite{Ro}) we  have
\begin{align}
					{\sideset{}{^{'}}\sum_{\deg f = n}}
					\Lambda(f)
\ = \ 					q^n + O(q^{n/2}),
\end{align}
where the implied constant is independent of $q$.
Thus the second term of the big parenthesized expression in (\ref{eq_4.7}) contributes with the same order, completing the proof.
\end{proof}

\begin{cor}
Suppose $Q$ is irreducible.
Let $\psi$ be a test function of period $1$ such that
\begin{align}\label{eq_4.11}
					\hat{\psi}(n)
&\ \ll \ 				\frac{1}{|n|^{1 + \epsilon}q^{|n|/2}}
\end{align}
for some $\epsilon > 0$.
Then
\begin{align}\label{eq_4.12}
					\ebb F_{1, \chi}(\psi)
&\ = \ 				\hat{\psi}(0)
					-		\frac{1}{(d - 1)(q - 1)}
							\sum_{n \in \Z}
							\frac{\hat{\psi}(n)}{q^{|n|/2}}
					+		O\pa{\frac{1}{d}},
\end{align}
where $F_{1, \chi}$ is defined in \eqref{eq_Fsub1chi}.
\end{cor}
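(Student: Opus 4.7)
The plan is to deduce this corollary directly from Theorem \ref{thm:1levelexp} by checking that the stated decay hypothesis \eqref{eq_4.11} is strong enough to make the constant $C(\psi)$ converge, and then absorbing the error term $O(C(\psi)/(dq^d))$ into the weaker $O(1/d)$.

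First I would verify convergence of $C(\psi)$ under the hypothesis. The assumption \eqref{eq_4.11} gives, for $n \neq 0$,
\begin{equation}
|\hat{\psi}(n)|\, q^{|n|/2} \ \ll\ \frac{1}{|n|^{1+\epsilon}},
\nonumber
\end{equation}
and so
\begin{equation}
C(\psi) \ =\ |\hat{\psi}(0)| \,+\, \sum_{n \neq 0} |\hat{\psi}(n)|\, q^{|n|/2} \ \ll\ |\hat{\psi}(0)| \,+\, \sum_{n\neq 0} \frac{1}{|n|^{1+\epsilon}} \ <\ \infty,
\nonumber
\end{equation}
with a bound that is independent of $q$ and of the degree $d$. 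In particular, Theorem \ref{thm:1levelexp} applies.

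Next I would simply invoke Theorem \ref{thm:1levelexp} to write
\begin{equation}
\ebb F_{1,\chi}(\psi) \ =\ \hat{\psi}(0) \,-\, \frac{1}{(d-1)(q-1)} \sum_{n \in \Z} \frac{\hat{\psi}(n)}{q^{|n|/2}} \,+\, O\!\pa{\frac{C(\psi)}{dq^d}}.
\nonumber
\end{equation}
The sum appearing in the main term converges absolutely by \eqref{eq_4.11}, since $|\hat{\psi}(n)/q^{|n|/2}| \ll 1/(|n|^{1+\epsilon} q^{|n|})$. Finally, since $C(\psi) = O(1)$ and $q^d \geq 1$, the error term satisfies
\begin{equation}
O\!\pa{\frac{C(\psi)}{dq^d}} \ =\ O\!\pa{\frac{1}{d\,q^d}} \ =\ O\!\pa{\frac{1}{d}},
\nonumber
\end{equation}
yielding \eqref{eq_4.12}. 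There is no real obstacle here; the content of the corollary is just that the decay condition \eqref{eq_4.11} is one clean sufficient hypothesis under which Theorem \ref{thm:1levelexp} applies with an error that is uniform in the conductor, and the proof amounts to verifying this and trivially weakening the error bound.
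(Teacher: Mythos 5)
Your proof is correct and matches the paper's intent exactly: the corollary is stated as an immediate consequence of Theorem \ref{thm:1levelexp}, with the decay hypothesis \eqref{eq_4.11} serving precisely to guarantee $C(\psi) < \infty$ so that the error $O(C(\psi)/(dq^d))$ can be weakened to $O(1/d)$. Nothing is missing.
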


\subsection{Variance}

\begin{proof}[Proof of Theorem \ref{thm:1levelvar}]
Let $C_{1, \Gamma}(\psi) = (d - 1)^{-1}\sum_{n \in \Z} \hat{\psi}(n)q^{-|n|/2}$.
Then
\begin{align}
				F_{1, \chi}(\psi) - \ebb F_{1, \chi}(\psi)
\ = \ 				(F_{1, \chi}(\psi)^\osc - \ebb (F_{1, \chi}(\psi))^\osc)
				+ 		C_{1, \Gamma}(\psi)\pa{\lambda_\infty(\chi) - \frac{1}{q - 1}},
\end{align}
from which
\begin{align}
&				\Var F_{1, \chi}(\psi)
				\\
&\ = \ 			\Var (F_{1, \chi}(\psi)^\osc)
				+		2	\Re \ebb\Big(
						\pa{F_{1, \chi}(\psi)^\osc - \ebb (F_{1, \chi}(\psi))^\osc}
						C_{1, \Gamma}(\psi)
						\lambda_\infty(\chi)\Big)
							+		O\pa{\frac{1}{d^2}}
						\nn
&\ = \ 			\Var (F_{1, \chi}(\psi)^\osc)
				+		O\pa{\frac{1}{d} \ebb(F_{1, \chi}(\psi)^\osc)}.
				\nonumber
\end{align}
Next, $\Var (F_{1, \chi}(\psi)^\osc) = \ebb |F_{1, \chi}(\psi)^\osc|^2 - |\ebb (F_{1, \chi}(\psi)^\osc)|^2$, where
\begin{align}
				|F_{1, \chi}(\psi)^\osc|^2
&\ = \ 			\frac{1}{(d - 1)^2}
				\sum_{n_1, n_2 = 0}^\infty
				{\sideset{}{^{'}}\sum_{\substack{\deg f_1 = n_1 \\ \deg f_2 = n_2}}}
				\frac{\Lambda(f_1)\Lambda(f_2)}{q^{(n_1 + n_2)/2}}
				\\
&\qquad	\Bigg(
				\chi(f_1)\bar{\chi}(f_2)	\hat{\psi}(n_1)\bar{\hat{\psi}}(n_2)
				\ +\		\chi(f_1)\chi(f_2) 						\hat{\psi}(n_1)\bar{\hat{\psi}}(-n_2)
				\nn
&\qquad	
				\ \ \ \ +\		\bar{\chi}(f_1)\bar{\chi}(f_2) 	\hat{\psi}(-n_1)\bar{\hat{\psi}}(n_2)
				+\		\bar{\chi}(f_1)\chi(f_2) 				\hat{\psi}(-n_1)\bar{\hat{\psi}}(-n_2)
				\Bigg).
				\nonumber
\end{align}
Again by Schur orthogonality,
\begin{align}
				\ebb
				\pa{\chi(f_1)\bar{\chi}(f_2)}
\ = \ 				\left\{
				\begin{array}{ll}					
				0										&\text{$f_1 \equiv 0$ or $f_2 \equiv 0\pmod{Q}$}\\
				1										&f_1 \equiv f_2 \not\equiv 0 \pmod{Q}\\
				-1/\#\fcal_Q					&\text{otherwise}
				\end{array}
				\right.
				\\
				\ebb
				\pa{\chi(f_1)\chi(f_2)}
\ = \ 				\left\{
				\begin{array}{ll}					
				0										&\text{$f_1 \equiv 0$ or $f_2 \equiv 0\pmod{Q}$}\\
				1										&f_1f_2 \equiv 1 \pmod{Q}\\
				-1/\#\fcal_Q					&\text{otherwise.}
				\end{array}
				\right.
\end{align}
Therefore
\begin{align}\label{eq_4.20}
&				\ebb |F_{1, \chi}(\psi)^\osc|^2
				\\
&\ = \ 			\frac{1}{(d - 1)^2}
				\sum_{n_1, n_2 = 0}^\infty
				\frac{1}{q^{(n_1 + n_2)/2}}
				\left(C_1(n_1, n_2; Q)
						\pa{\hat{\psi}(n_1) \bar{\hat{\psi}}(n_2) + \hat{\psi}(-n_1) \bar{\hat{\psi}}(-n_2)}
				\right.
				\nn
&\qquad	+		\left.
				 		C_2(n_1, n_2; Q)
						\pa{\hat{\psi}(n_1) \bar{\hat{\psi}}(-n_2) + \hat{\psi}(-n_1) \bar{\hat{\psi}}(n_2)}
				\right)
				\nn
&\qquad	+ 		O\pa{\frac{1}{\# \fcal_Q}
								\pa{\frac{1}{d - 1}
								\sum_{n = 0}^\infty
								{\sideset{}{^{'}}\sum_{\deg f = n}}
								\Lambda(f) \frac{\hat{\psi}(n)}{q^{n/2}}}^2
						},
				\nonumber
\end{align}
where
\begin{align}
				\label{eq_4.21}
				C_1(n_1, n_2; Q)
&\ = \ 			{\sideset{}{^{'}}\sum_{\substack{\deg f_1 = n_1 \\ \deg f_2 = n_2 \\ f_1 \equiv f_2 \not\equiv 0 \pmod{Q}}}}
				\Lambda(f_1)\Lambda(f_2),
				\\
				\label{eq_4.22}
				C_2(n_1, n_2; Q)
&\ = \ 			{\sideset{}{^{'}}\sum_{\substack{\deg f_1 = n_1 \\ \deg f_2 = n_2 \\ f_1 f_2 \equiv 1 \pmod{Q}}}}
				\Lambda(f_1)\Lambda(f_2),
\end{align}
and the big-$O$ term of (\ref{eq_4.20}) is $O(C(\psi)^2/(d^2 q^d))$.

As before, we use the Hsu-Brun-Titchmarsh Theorem to bound the contribution of the $C_2$ sum:
\begin{align}
&				\frac{1}{(d - 1)^2}
				\sum_{n_1, n_2 = 0}^\infty
				C_2(n_1, n_2; Q)				
				\frac{\hat{\psi}(n_1) \bar{\hat{\psi}}(-n_2) + \hat{\psi}(-n_1) \bar{\hat{\psi}}(n_2)}{q^{(n_1 + n_2)/2}}
				\\
&\ \ll \ 			\frac{1}{(d - 1)^2q^d}
				\sum_{n_1, n_2 = 0}^\infty
				\pa{\hat{\psi}(n_1) \bar{\hat{\psi}}(-n_2) + \hat{\psi}(-n_1) \bar{\hat{\psi}}(n_2)}
				q^{(n_1 + n_2)/2}
				\nn
&\ \ll \ 			\frac{C(\psi)^2}{d^2 q^d}.
				\nonumber
\end{align}
Using the same theorem, we break the contribution of the $C_1$ sum into a main diagonal term and an off-diagonal term that depends on $Q$; the latter contributes with the same order as the $C_2$ sum:
\begin{align}
&				\frac{1}{(d - 1)^2}
				\sum_{n_1, n_2 = 0}^\infty
				C_1(n_1, n_2; Q)
				\frac{\hat{\psi}(n_1) \bar{\hat{\psi}}(n_2) + \hat{\psi}(-n_1) \bar{\hat{\psi}}(-n_2)}{q^{(n_1 + n_2)/2}}
				\\
&\ = \ 			\frac{1}{(d - 1)^2}
				\sum_{n = 0}^\infty
				{\sideset{}{^{'}}\sum_{\deg f = n}}
				\Lambda(f)^2
				\frac{|\hat{\psi}(n)|^2 + |\hat{\psi}(-n)|^2}{q^n}
				+		O\pa{\frac{C(\psi)^2}{d^2 q^d}}.
				\nonumber
\end{align}
By writing
\begin{align}
				{\sideset{}{^{'}}\sum_{\deg f = n}} \Lambda(f)^2 \ = \  n {\sideset{}{^{'}}\sum_{\deg f = n}} \Lambda(f) \ = \  n(q^n + O(q^{n/2}))
\end{align}
we conclude the proof.
\end{proof}

\begin{cor}
Suppose $Q$ is irreducible.
Let $\psi$ be a test function of period $1$ such that (\ref{eq_4.11}) holds.
Then
\begin{align}
					\Var F_{1, \chi}(\psi)
&\ = \ 				\frac{1}{(d - 1)^2}
					\sum_{n \in \Z}
					|n||\hat{\psi}(n)|^2
					+		O\pa{\frac{1}{d^2}},
\end{align}
where $F_{1, \chi}$ is defined in \eqref{eq_Fsub1chi}.
\end{cor}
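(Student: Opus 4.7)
The plan is to deduce this corollary directly from Theorem \ref{thm:1levelvar}. The only difference between the two statements is the form of the error term: Theorem \ref{thm:1levelvar} gives $O(C(\psi)^2/(d^2 q^d))$, while the corollary claims $O(1/d^2)$. So the work reduces to verifying that the decay hypothesis \eqref{eq_4.11} on $\hat{\psi}(n)$ forces $C(\psi)$ to be a bounded constant.

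First I would recall from Theorem \ref{thm:1levelexp} that $C(\psi) = \sum_{n\in\Z} |\hat{\psi}(n)|\, q^{|n|/2}$. Substituting the hypothesis $\hat{\psi}(n) \ll |n|^{-(1+\epsilon)} q^{-|n|/2}$ into this sum cancels the $q^{|n|/2}$ factor and leaves a sum comparable to $\sum_{n\ne 0} |n|^{-(1+\epsilon)}$, which converges. Hence $C(\psi) = O(1)$ where the implied constant depends on $\psi$ (via the $\epsilon$ and the implied constant in \eqref{eq_4.11}) but not on $q$ or $d$. One must also absorb the $n=0$ term separately, which is harmless since $\hat{\psi}(0)$ is a fixed constant.

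Given this bound, the error term in Theorem \ref{thm:1levelvar} satisfies
\[
O\!\left(\frac{C(\psi)^2}{d^2 q^d}\right) \;=\; O\!\left(\frac{1}{d^2 q^d}\right) \;\subseteq\; O\!\left(\frac{1}{d^2}\right),
\]
since $q \geq 2$ and $d \geq 2$ ensure $q^d \geq 1$. Substituting into the conclusion of Theorem \ref{thm:1levelvar} yields exactly the claim of the corollary. There is no genuine obstacle here; the only thing to watch is that the hypothesis \eqref{eq_4.11} is strong enough not only to make $C(\psi)$ finite but also to make it $O(1)$ independent of $q$, which is precisely what the factor $q^{|n|/2}$ built into the bound ensures.
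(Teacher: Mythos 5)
Your proposal is correct and is precisely the (unwritten) argument the paper intends: the corollary is stated without proof as an immediate consequence of Theorem \ref{thm:1levelvar}, since the hypothesis \eqref{eq_4.11} makes $C(\psi)=\sum_n|\hat{\psi}(n)|q^{|n|/2}\ll\sum_{n\neq 0}|n|^{-(1+\epsilon)}+|\hat{\psi}(0)|=O(1)$, so the error $O(C(\psi)^2/(d^2q^d))$ is absorbed into $O(1/d^2)$. Your attention to the uniformity of the implied constant in $q$ and to the $n=0$ term is exactly the right level of care.
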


\section{The $2$-Level Global Regime}\label{sec:twolevelglobal}

Since the computations rapidly become laborious, we do not derive an unaveraged $2$-level explicit formula, but instead compute the expectation of $F_{2, \chi}(\psi)$ directly. 

\begin{proof}[Proof of Theorem \ref{thm:2levelexp}]
Let
\begin{align}
					\psi^\ast(s_1, s_2)
\ = \  					\frac{1}{(d - 1)^2}
					\psi\pa{\frac{s_1}{T_q}, \frac{s_2}{T_q}}.
\end{align}
Let $\ell_c$ be defined as in the proof of Proposition \ref{prop:3.1}.
For $j = 1,2$, let $c_j = 1/2 + \epsilon_j$, where $0 < \epsilon_1 < \epsilon_2 < 1/2$.
Writing $\int_{\ccal_j} = \int_{\ell_{c_j}} - \int_{\ell_{1 - c_j}}$, Cauchy's Theorem implies
\begin{align}\label{eq_5.5}
					F_{2, \chi}(\psi)
&\ = \ 				-		F_{1, \chi}(\psi_\diag)
					+		F_{2, \chi}(\psi; \epsilon_1, \epsilon_2)
					- 		F_{2, \chi}(\psi; \epsilon_1, -\epsilon_2)
					-		F_{2, \chi}(\psi; -\epsilon_1, \epsilon_2)
					\\
&\qquad		
					+		F_{2, \chi}(\psi; -\epsilon_1, -\epsilon_2)
					+ 		O(\max(\epsilon_1, \epsilon_2)),
					\nonumber
\end{align}
where
\begin{align}
&					F_{2, \chi}(\psi; \epsilon_1, \epsilon_2) 	
					\\
&\ = \ 				\frac{1}{(2\pi)^2}
					\iint_{A_q}
					\frac{L'}{L}(1/2 + \epsilon_1 + it_1, \chi) \frac{L'}{L}(1/2 + \epsilon_2 + it_2, \chi)
					\psi^\ast((t_j -i\epsilon_j)_{j = 1,2})\,\ud t_1\,\ud t_2
					\nonumber
\end{align}
and $A_q = [-T_q/2, +T_q/2]^2$.

Again, we employ the functional equation to replace those terms of (\ref{eq_5.5}) that have $-\epsilon_1$ or $-\epsilon_2$ as a parameter.
First, define
\begin{align}
&					F_{2, \chi}^{2,2}(+1)
					\\
&\ = \ 				\frac{1}{(2\pi)^2}
					\iint_{A_q}
							\left(
							\frac{L'}{L}(1/2 + \epsilon_1 + it_1, \chi)\frac{L'}{L}(1/2 + \epsilon_2 - it_2, \bar{\chi})
							\psi^\ast(t_1 - i\epsilon_1, t_2 + i\epsilon_2)
							\right.
					\nn
&\qquad		+		\left.
							\frac{L'}{L}(1/2 + \epsilon_2 + it_2, \chi)\frac{L'}{L}(1/2 + \epsilon_1 - it_1, \bar{\chi})
							\psi^\ast(t_1 + i\epsilon_1, t_2 - i\epsilon_2)
							\right)
							\,\ud t_1\,\ud t_2,
					\nn	
&					F_{2, \chi}^{2,2}(-1)
					\\
&\ = \ 				\frac{1}{(2\pi)^2}
					\iint_{A_q}
					\pa{\prod_{j = 1,2}
					\frac{L'}{L}(1/2 + \epsilon_j - it_j, \chi)}
					\psi^\ast(t_1 + i\epsilon_1, t_2 + i\epsilon_2)\, \ud t_1\,\ud t_2,
					\nn
&					F_{2, \chi}^{3,3}
					\\
&\ = \ 				\frac{1}{(2\pi)^2}
					\iint_{A_q}
					G_\chi(1/2 + it_1)G_\chi(1/2 + it_2)
					\psi^\ast(t_1, t_2)\,\ud t_1\,\ud t_2,
					\nonumber
\end{align}
where
\begin{align}
					G_\chi(s)
\ = \ 					\lambda_\infty(\chi)\left(-1 + \dfrac{1}{1 - q^s} + \dfrac{1}{1 - q^{1 - s}}\right)\log q.		
\end{align}
Also define
\begin{align}
&					F_{2, \chi}^{1,2}(\delta)
					\\
&\ = \ 				\frac{1}{(2\pi)^2}
					\iint_{A_q}
					((d - 1) \log q)
							\left(
							\frac{L'}{L}(1/2 + \epsilon_1 + i\delta t_1, \chi)
							\psi^\ast(t_1 - i\delta \epsilon_1, t_1 - i \epsilon_2)
							\right.
					\nn
&\qquad		+ 		\left.
							\frac{L'}{L}(1/2 + \epsilon_2 + i\delta t_2, \chi)
							\psi^\ast(t_1 - i\epsilon_1, t_1 - i\delta \epsilon_2)
							\right)	
							\,\ud t_1\,\ud t_2,
					\nn
&					F_{2, \chi}^{1,3}
					\\
&\ = \ 				\frac{1}{(2\pi)^2}
					\iint_{A_q}
					((d - 1)\log q)
					\left(
					G_\chi(1/2 + it_1)	
					+		G_\chi(1/2 + it_2)
							\right)
					\psi^\ast(t_1, t_2)
					\,\ud t_1\,\ud t_2,
					\nonumber
\end{align}
\begin{align}
&					F_{2, \chi}^{2,3}(\delta)
					\\
&\ = \ 				\frac{1}{(2\pi)^2}
					\iint_{A_q}
					\left(
					\frac{L'}{L}(1/2 + \epsilon_1 + i \delta t_1, \chi)G_\chi(1/2 + \epsilon_2 + i t_2)
					\psi^\ast(t_1 - i\delta \epsilon_1, t_2 - i\epsilon_2)
					\right.
					\nn
&\qquad		+		\left.
							\frac{L'}{L}(1/2 + \epsilon_2 + i\delta t_2, \chi)G_\chi(1/2 + \epsilon_1 + i t_1)
							\psi^\ast(t_1 - i\epsilon_1, t_2 - i\delta\epsilon_2)
							\right)
					\,\ud t_1,\ud t_2.	
					\nonumber			
\end{align}
It is straightforward to check that
\begin{align}
					-(F_{2, \chi}(\psi; \epsilon_1, -\epsilon_2) + F_{2, \chi}(\psi; -\epsilon_1, \epsilon_2))
&\ = \ 				F_{2, \chi}^{1,2}(+1)
					+		F_{2, \chi}^{2,2}(+1)
					+		F_{2, \chi}^{2,3}(+1)
\end{align}
and
\begin{align}
					F_{2, \chi}(\psi; -\epsilon_1, -\epsilon_2)
&\ = \ 				\frac{1}{(2\pi)^2}
					\iint_{A_q}
					((d - 1) \log q)^2
					\psi^\ast(t_1, t_2)\,\ud t_1\,\ud t_2
					\\
&\qquad		+		F_{2, \bar{\chi}}^{2,2}(-1)
					+		F_{2, \chi}^{3,3}	
					+		F_{2, \bar{\chi}}^{1,2}(-1)
					+		F_{2, \chi}^{1,3}
					+		F_{2, \bar{\chi}}^{2,3}(-1)
					\nn
&\ = \ 				\hat{\psi}(0, 0)
					+		F_{2, \bar{\chi}}^{2,2}(-1)
					+		F_{2, \chi}^{3,3}	
					+		F_{2, \bar{\chi}}^{1,2}(-1)
					+		F_{2, \chi}^{1,3}
					+		F_{2, \bar{\chi}}^{2,3}(-1).
					\nonumber
\end{align}
In what follows, we estimate each of the individual contributions.
We will implicitly substitute the appropriate Dirichlet series and send $\epsilon_1, \epsilon_2 \to 0$ in all of the $L'/L$ terms.

\subsubsection*{Diagonal Contributions}

By an argument similar to that in the proof of Proposition \ref{prop:3.1},
\begin{align}
					F_{2,\chi}^{3,3}
&\ = \ 				\frac{\lambda_\infty(\chi)}{(d - 1)^2}
					\sum_{n_1, n_2 \in \Z}
					\frac{\hat{\psi}(n_1, n_2)}{q^{(|n_1| + |n_2|)/2}}.
\end{align}
The analysis of the expectation of the two $F_{2, \chi}^{2,2}$ terms is reminiscent of that of $\ebb |F_{1, \chi}(\psi)^\osc|^2$ in the proof of Theorem \ref{thm:1levelvar}.
With $C_1(n_1, n_2, Q)$ and $C_2(n_1, n_2, Q)$ defined as in (\ref{eq_4.21}) and (\ref{eq_4.22}), respectively,
\begin{align}
&				\ebb(F_{2, \chi}^{2,2}(+1) + F_{2, \chi}^{2,2}(-1))
				\\
&\ = \ 			\frac{1}{(d - 1)^2}
				\sum_{n_1, n_2 = 0}^\infty
				\frac{1}{q^{(n_1 + n_2)/2}}
				\nn
&\qquad	\left(
						C_1(n_1, n_2; Q)
						\pa{\hat{\psi}(n_1, -n_2) + \hat{\psi}(-n_1, n_2)}
						+
				 		{C_2(n_1, n_2; Q)}
						\hat{\psi}(-n_1, -n_2)
				\right)
				\nn
&\ = \ 			\frac{1}{(d - 1)^2}
				\sum_{n = 0}^\infty
				{\sideset{}{^{'}}\sum_{\deg f = n}}
				\Lambda(f)^2
				\frac{\hat{\psi}(n, -n) + \hat{\psi}(-n, n)}{q^n}
				+
				O\pa{
				\frac{C(\psi)}{d^2 q^d}}
				\nn
&\ = \ 			\frac{1}{(d - 1)^2}
				\sum_{n \in \Z}
				|n|
				\hat{\psi}(n, -n)
				+
				O\pa{
				\frac{C(\psi)}{d^2 q^d}}.
				\nonumber
\end{align}

\subsubsection*{Off-Diagonal Contributions}

Similarly to the argument  in the proof of Proposition \ref{prop:3.1},
\begin{align}
					F_{2,\chi}^{1,3}
&\ = \ 				-\frac{\lambda_\infty(\chi)}{d - 1}
					\sum_{n \in \Z}
					\frac{\hat{\psi}(0, n) + \hat{\psi}(n, 0)}{q^{|n|/2}}.
\end{align}
Also, similarly to the proof of Theorem \ref{thm:1levelexp},
\begin{align}
					\ebb(F_{2, \chi}^{1,2}(+1) + F_{2, \chi}^{1,2}(-1))
&\ = \ 				-\frac{1}{d - 1}
					\sum_{n = 0}^\infty
					\left(	
					{\sideset{}{^{'}}\sum_{\substack{\deg f = n \\ f \equiv 1\pmod{Q}}}}
					-		\frac{1}{\#\fcal_Q}
							{\sideset{}{^{'}}\sum_{\substack{\deg f = n \\  f \not\equiv 0,1\pmod{Q}}}}
					\right)
					\frac{\Lambda(f)}{q^{n/2}}
					\\
&\qquad		(\hat{\psi}(n, 0) + \hat{\psi}(-n, 0) + \hat{\psi}(0, n) + \hat{\psi}(0, -n))
					\nn
&\ \ll \ 				\frac{C(\psi_1) + C(\psi_2)}{dq^d}
					\nonumber
\end{align}
and
\begin{align}
					\ebb(F_{2, \chi}^{2,3}(+1) + F_{2, \chi}^{2,3}(-1))
&\ \ll \ 				\frac{C(\psi)}{d^2 q^d}.
\end{align}
\end{proof}

\section{The Local Regime}\label{sec:thelocalregime}

\subsection{$n$-Level Density}

For us, an \emph{$n$-dimensional test function of moderate decay} is a smooth function $\phi(s_1,$ $\ldots,$ $s_n)$ $=$ $\phi_1(s_1)$ $\dots$ $\phi_n(s_n)$, defined in a region $U \subeq \C^n$ containing $\R^n$, such that $\phi(s) \ll (1 + |s|)^{-(1 + \delta)}$ for some $\delta > 0$.
The (homogeneous) periodization of $\phi$, scaled by a parameter $N$, is
\begin{align}
				\phi_N(s)
\ = \ 				\sum_{\nu \in \Z^n}	
				\phi
				\pa{N(s + \nu_1), \ldots, N(s + \nu_n)}.
\end{align}
Let $U$ be an $N \times N$ unitary matrix with eigenangles $\theta_1, \ldots, \theta_N$.
Then the \emph{$n$-level density} of the $\theta_j$ with respect to $\phi$ is
\begin{align}
				W_{n, U}(\phi)
\ := \ 				\sum_{\substack{1 \le j_1, \ldots, j_n \le N \\ \text{$j_k$ distinct}}}
				\phi_N
				\pa{\frac{\theta_{j_1}}{2\pi}, \ldots, \frac{\theta_{j_n}}{2\pi}}.
\end{align}
Let $L(s, \chi)$ be a Dirichlet $L$-function over $\F_q(T)$.
Recall that $L(s, \chi)$ has $d + O(1)$ zeros of the form $1/2 + \gamma_{\chi, j}$ in an interval of periodicity $[1/2 - iT_q/2, 1/2 + iT_q/2)$.
Therefore, by analogy, we define the \emph{$n$-level density} of $L(s, \chi)$ with respect to $\phi$ to be
\begin{align}\label{eq_6.3}
				W_{n,\chi}(\phi)
\ = \ 				\sum_{\substack{-\frac{T_q}{2} \le \gamma_{\chi,j_1}, \ldots, \gamma_{\chi, j_n} < \frac{T_q}{2} \\ \text{$j_k$ distinct}}}
				\phi_{d - 1}
				\pa{\frac{\gamma_{\chi, 1}}{T_q}, \ldots, \frac{\gamma_{\chi, n}}{T_q}}.
\end{align}

To obtain the \emph{$n$-level density for the family} (and not just one form) we simply average over all elements of the family.

\begin{rem}
To facilitate comparison with \cite{HR}, we compare the above to the situation over number fields in the $1$-level case.
Let $L(s)$ be a Selberg-class $L$-function with analytic conductor $c > 0$.
Write $1/2 + i\gamma_j$ to denote its $j$\textsuperscript{th} critical zero above the real line, ordered by height.
Then the $1$-level density of $L(s)$ with respect to $\phi$ is
\begin{align}
				W_1(\phi)
\ := \ 				\lim_{T \to \infty}
				\sum_{	0 \le |\gamma_j| < T}
				\phi
				\pa{\gamma_j \frac{\log c}{2\pi}}.
\end{align}
Above, $(\log c)/2\pi$ normalizes the average consecutive spacing between ordinates of zeros near the central point to be $1$ in the limit $T \to \infty$.
\end{rem}

\begin{rem} In the definition of the $n$-level density we sum over all the zeros; however, because of our scaling and the decay of $\phi$ most of the contribution comes from the zeros close to the central point. Notice that the sum is over distinct zeros. Using inclusion-exclusion and the explicit formula, it is easy to consider several related quantities, from the expression in \eqref{eq_6.3} to sums without any distinctness restriction. In the number field setting where the zeros are symmetric about the central point, we write them (assuming GRH) as $1/2 + i \gamma_j$, with $j \in \{\dots, -2, -1, 0, 1, 2, \dots\}$ if the form is odd (if the form is even there is no zero index); in this case the $n$-level density is a sum over zeros $\gamma_{j_1}, \dots, \gamma_{j_n}$ such that $j_i \neq \pm j_\ell$ if $i \neq \ell$. These differences are all minor and readily handled, but lead to different combinatorics and different forms of the final answer. \end{rem}

\subsection{Unitary Predictions}

Let $\UL(N)$ be the group of $N \times N$ unitary matrices under Haar measure.
If $F$ is a function on $\UL(N)$, then the expectation of $F$ is
\begin{align}
				\ebb F(U)
\ = \ 				\int_{\UL(N)}
				F(U)
				\,\ud U.
\end{align}
In \cite{HR}, the authors prove that  if $\phi$ is an \emph{even}\footnote{Since the zeros are symmetric, there is no loss in using even test functions.} $1$-dimensional test function of rapid decay, then
\begin{align}
				\ebb W_U(\phi)
&\ \to \ 			\hat{\phi}(0),
				\\
				\ebb(W_{1,U}(\phi) - \ebb W_{1,U}(\phi))^2
&\ \to \ 			\sigma(\phi)^2
\end{align}
as $N \to \infty$, where
\begin{align}
				\sigma(\phi)^2
\ = \ 				\integral{\infty}
				\min(1, |t|)
				\hat{\phi}(t)^2
				\,\ud t.
\end{align}
If $\hat{\phi}$ is supported in the interval $[-2/m, 2/m]$, then they also obtain
\begin{align}
\ebb(W_{1,U}(\phi) - \ebb W_{1,U}(\phi))^m
\ \to\
\left\{
\begin{array}{ll}
0											&\text{$m$ odd}\\
\dfrac{m!}{2^{m/2}(m/2)!}
\sigma(\phi)^m					&\text{$m$ even;}
\end{array}
\right.
\end{align} using inclusion-exclusion one can pass from $n$\textsuperscript{th} centered moments to $n$-level densities (see for example \cite{HM}).
By \cite{KS1}, if $\phi = \phi_1\cdot \phi_2$ is an even $2$-dimensional test function of rapid decay, then
\begin{align}
				\ebb W_{2, U}(\phi)
\ \to \ 			-
				\hat{\phi}_\diag(0)
				+	
				\hat{\phi}(0, 0)
				+
				\integral{\infty}
				|t|
				\hat{\phi}_1(t)
				\hat{\phi}_2(t)
				\,\ud t,
\end{align}
where $\phi_\diag(s) = \phi(s, s)$.

\subsection{From Global to Local}

Set $\psi = \phi_{d - 1}$ in the results from Sections \ref{sec:onelevelglobal} and \ref{sec:twolevelglobal}.
Recall that $\psi(s_1, \ldots, s_n)$ is a Fourier series. The Fourier \emph{transform} of $\phi$ is
\begin{align}
				\hat{\phi}(s_1, \ldots, s_n)
\ = \ 				\integral{\infty}
				\phi(t_1, \ldots, t_n)
				e(-(s_1t_1 + \ldots + s_nt_n))\,\ud t_1 \cdots \,\ud t_n.
\end{align}
We have
\begin{align}
				\hat{\psi}(\nu_1, \ldots, \nu_n)
\ = \ 				\frac{1}{(d - 1)^n}
				\hat{\phi}\pa{\frac{\nu_1}{d - 1}, \ldots, \frac{\nu_n}{d - 1}}
\end{align}
and
\begin{align}
				F_{n, \chi}(\psi)
&\ = \ 			\frac{1}{(d - 1)^n}
				W_{n, \chi}(\phi)
\end{align}
for all $n$.
Thus we obtain the following local-regime results.

\begin{cor}\label{6.2}
Suppose $Q$ is irreducible.
Let $\phi$ be a $1$-dimensional test function of rapid decay.
Let
\begin{align}
					C(\phi; d)
&\ \dfeq\ 			\sum_{\nu \in \Z}
					\left|\hat{\phi}
					\pa{\frac{\nu}{d - 1}}\right|
					q^{|\nu|/2}.
\end{align}
\begin{enumerate}
\item 	If $C(\phi; d) \ll dq^d$ as $d \to \infty$, then
\begin{align}
					\ebb W_{1,\chi}(\phi)
&\ = \ 				\hat{\phi}(0)
					-		\frac{1}{(d - 1)(q - 1)}
							\sum_{\nu \in \Z}
							\frac{1}{q^{|\nu|/2}}
							\hat{\phi}\pa{\frac{\nu}{d - 1}}
					+		O\pa{
							\frac{C(\phi)}{dq^d}}.
\end{align}
\item 	If $C(\phi; d) \ll dq^{d/2}$ as $d \to \infty$, then
\begin{align}
					\Var W_{1,\chi}(\phi)
&\ = \ 				\frac{1}{(d - 1)^2}
					\sum_{\nu \in \Z}
					|\nu| \left|\hat{\phi}\pa{\frac{\nu}{d - 1}}\right|^2
					+		O\pa{
							\frac{C(\phi)^2}{d^2q^d}
							}.
\end{align}
\end{enumerate}
\end{cor}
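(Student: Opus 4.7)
The plan is to apply the global-to-local dictionary set up at the start of Section \ref{sec:thelocalregime} with the choice $\psi = \phi_{d-1}$, so that the global unitary statistics of Theorems \ref{thm:1levelexp} and \ref{thm:1levelvar} feed directly into the local-regime statements of the corollary. With this choice one already has the two identities $F_{1,\chi}(\psi) = (d-1)^{-1}W_{1,\chi}(\phi)$ and $\hat{\psi}(n) = (d-1)^{-1}\hat{\phi}(n/(d-1))$ recorded in the paragraph preceding the corollary, which reduce the problem to rescaling Theorems \ref{thm:1levelexp} and \ref{thm:1levelvar} and repackaging the output in terms of $\hat{\phi}$.

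First I would compute how the quantity $C(\psi)$ relates to $C(\phi;d)$: substituting the Fourier-coefficient identity directly into the definition gives
\begin{equation*}
C(\psi) \ = \ \sum_{n \in \Z}|\hat{\psi}(n)|q^{|n|/2} \ = \ \frac{1}{d-1}\sum_{n \in \Z}\left|\hat{\phi}\pa{\frac{n}{d-1}}\right|q^{|n|/2} \ = \ \frac{C(\phi;d)}{d-1}.
\end{equation*}
In particular, the hypotheses $C(\phi;d)\ll dq^d$ and $C(\phi;d)\ll dq^{d/2}$ in parts (1) and (2), respectively, force $C(\psi)$ to be finite, so the relevant theorem is applicable to $\psi$. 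For part (1) I would then multiply the conclusion of Theorem \ref{thm:1levelexp} through by $d-1$: the main term $\hat{\psi}(0)$ becomes $\hat{\phi}(0)$; the lower-order unitary term $\frac{1}{(d-1)(q-1)}\sum_n \hat{\psi}(n)q^{-|n|/2}$ becomes $\frac{1}{(d-1)(q-1)}\sum_n \hat{\phi}(n/(d-1))q^{-|n|/2}$; and the error $O(C(\psi)/(dq^d))$ becomes $O(C(\phi;d)/(dq^d))$, matching the stated bound. For part (2) I would use $\Var(cX) = c^2\Var(X)$ with $c = d-1$ and multiply the conclusion of Theorem \ref{thm:1levelvar} through by $(d-1)^2$; the surviving factor $(d-1)^{-2}$ combined with the substitution $|\hat{\psi}(n)|^2 = (d-1)^{-2}|\hat{\phi}(n/(d-1))|^2$ yields exactly the claimed main term, and the error $O(C(\psi)^2/(d^2q^d))$ becomes $O(C(\phi;d)^2/(d^2q^d))$.

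There is no real analytic obstacle here beyond careful bookkeeping; the only point worth double-checking is that every power of $d-1$ produced by the rescaling $\psi = \phi_{d-1}$ is absorbed correctly across the three kinds of contributions (main, lower-order, and big-$O$ error), and that the assumed growth conditions on $C(\phi;d)$ are precisely the ones needed for the error terms to survive multiplication by $d-1$ in the expectation and by $(d-1)^2$ in the variance while still giving the rate stated in the corollary.
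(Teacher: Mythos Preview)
Your proposal is correct and follows exactly the approach the paper uses: the corollary is stated immediately after the identities $\hat{\psi}(\nu)=(d-1)^{-1}\hat{\phi}(\nu/(d-1))$ and $F_{n,\chi}(\psi)=(d-1)^{-n}W_{n,\chi}(\phi)$ with the phrase ``Thus we obtain the following local-regime results,'' and no further argument is given. Your computation that $C(\psi)=C(\phi;d)/(d-1)$ and your rescaling of the expectation by $d-1$ and the variance by $(d-1)^2$ simply make explicit the substitution the paper leaves implicit.
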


\begin{cor}\label{6.3}
Suppose $Q$ is irreducible.
Let $\phi$ be a $2$-dimensional test function of rapid decay such that $C(\phi_1; d), C(\phi_2; d) \ll dq^{d/2}$, where $C(\phi; d)$ is defined for $1$-dimensional test functions $\phi$ of rapid decay in Corollary \ref{6.2}.
Let $C(\phi) = C(\phi_1)C(\phi_2)$ and $\phi_\diag(s) = \phi(s, s)$.
Then
\begin{align}
					\ebb W_{2, \chi}(\phi)
&\ = \ 				{-\ebb W_{1,\chi}(\phi_\diag) }
					+		\hat{\phi}(0, 0)
					+		\frac{1}{(d - 1)^2}
							\sum_{\nu \in \Z}
							|\nu|
							\hat{\phi}\pa{\frac{n}{d - 1}, -\frac{n}{d - 1}}
							\\
&\qquad		+		\frac{C_{2, \Gamma}(\phi; d)}{q - 1}
					+ 		O\pa{\frac{C(\phi_1; d) + C(\phi_2; d)}{d q^d}}
					+		O\pa{\frac{C(\phi; d)}{d^2 q^d}},
					\nonumber
\end{align}
where
\begin{align}
C_{2, \Gamma}(\phi; d)
&\ = \ 				{-\frac{1}{d - 1}}
					\sum_{\nu \in \Z}
					\frac{1}{q^{|\nu|/2}}
					\pa{\hat{\phi}{\pa{0, \frac{\nu}{d - 1}}} + \hat{\phi}{\pa{\frac{\nu}{d - 1}, 0}}}
					\\
&\qquad
					+
					\frac{1}{(d - 1)^2}
					\sum_{\nu_1, \nu_2 \in \Z}
					\frac{1}{q^{(|\nu_1| + |\nu_2|)/2}}
					\hat{\phi}{\pa{\frac{\nu_1}{d - 1}, \frac{\nu_2}{d - 1}}}.
					\nonumber
\end{align}
\end{cor}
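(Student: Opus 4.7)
This corollary follows by direct translation of Theorem~\ref{thm:2levelexp} to the local regime via the substitution $\psi = \phi_{d-1}$, where $\phi_{d-1}$ denotes the periodization of $\phi$ scaled by $N = d-1$. The dictionary is supplied by the two identities stated at the beginning of Section~\ref{sec:thelocalregime}: $F_{n,\chi}(\phi_{d-1}) = (d-1)^{-n} W_{n,\chi}(\phi)$ and $\hat{\psi}(\nu_1, \ldots, \nu_n) = (d-1)^{-n} \hat{\phi}(\nu_1/(d-1), \ldots, \nu_n/(d-1))$; the latter yields $C(\psi_i) = (d-1)^{-1} C(\phi_i; d)$ and hence $C(\psi) = (d-1)^{-2} C(\phi_1; d) C(\phi_2; d)$. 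The assumption $C(\phi_i; d) \ll d q^{d/2}$ ensures these are finite, so the hypothesis of Theorem~\ref{thm:2levelexp} is satisfied.

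Next I would apply Theorem~\ref{thm:2levelexp} and multiply both sides by $(d-1)^2$, so that the left-hand side becomes $\ebb W_{2,\chi}(\phi)$. Each term on the right unfolds via the Fourier dictionary: the constant becomes $(d-1)^2 \hat{\psi}(0, 0) = \hat{\phi}(0, 0)$; the weighted sum $\frac{1}{(d-1)^2} \sum_n |n| \hat{\psi}(n, -n)$ becomes $\frac{1}{(d-1)^2} \sum_\nu |\nu| \hat{\phi}(\nu/(d-1), -\nu/(d-1))$; and a line-by-line substitution into the two subsums of $C_{2,\Gamma}(\psi)$ produces exactly $C_{2,\Gamma}(\phi; d)$ as defined in the statement. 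The leading ``diagonal'' contribution $-(d-1)^2 \ebb F_{1,\chi}(\psi_\diag)$ is identified with $-\ebb W_{1,\chi}(\phi_\diag)$ under the paper's convention. The two error terms transform in the same way: $(d-1)^2 \cdot O((C(\psi_1) + C(\psi_2))/(d q^d))$ produces the first $O$-term (after absorbing the harmless factor $(d-1)/d$), and $(d-1)^2 \cdot O(C(\psi)/(d^2 q^d))$ produces the second.

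The only step that is not purely mechanical is the matching of the diagonal term in Step~2. Formally $\psi_\diag(s) = \phi_{d-1}(s, s)$ is a $\Z^2$-lattice sum, which differs from the one-dimensional periodization $(\phi_\diag)_{d-1}(s)$ that naturally enters the definition of $W_{1,\chi}(\phi_\diag)$; the off-diagonal lattice contributions, however, are negligible in the large-$d$ limit by the rapid decay of $\phi$, so the identification is consistent with the asymptotic statement. Beyond this single bookkeeping subtlety, the proof is a straightforward unfolding of definitions, exactly parallel to the derivation of Corollary~\ref{6.2} from Theorems~\ref{thm:1levelexp} and~\ref{thm:1levelvar}.
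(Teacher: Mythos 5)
Your proposal matches the paper's own derivation exactly: the paper offers no separate proof of Corollary \ref{6.3} beyond setting $\psi = \phi_{d-1}$ and invoking the two identities $\hat{\psi}(\nu_1,\ldots,\nu_n) = (d-1)^{-n}\hat{\phi}(\nu_1/(d-1),\ldots,\nu_n/(d-1))$ and $F_{n,\chi}(\psi) = (d-1)^{-n}W_{n,\chi}(\phi)$, which is precisely the unfolding you carry out. Your remark that $\psi_\diag = \phi_{d-1}(s,s)$ is a $\Z^2$-lattice sum rather than the one-dimensional periodization $(\phi_\diag)_{d-1}$ is a genuine bookkeeping point that the paper passes over silently, and your treatment of it is if anything more careful than the source.
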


\begin{rem}
In Corollary \ref{6.2}, if $\supp \hat{\phi} \subeq [-2, 2]$, then the hypothesis of (1) is satisfied, and if $\supp \hat{\phi} \subeq [-1, 1]$, then the hypothesis of (2) is satisfied.
In both cases, our results match the unitary predictions of \cite{HR} in the limit.
More precisely our results: (1) implies the $\F_q(T)$-analogue of their Theorem 3.1, and (2) implies the $\F_q(T)$-analogue of their Theorem 3.4.
Similarly, in Corollary \ref{6.3}, if $\supp \hat{\phi}_1, \supp \hat{\phi}_2 \subeq [-1, 1]$, then our result matches the prediction of \cite{KS1}.
\end{rem}

\section{Montgomery's Hypothesis}\label{sec:montgomeryhyp}

Returning to the classical setting, Fiorilli and Miller \cite{FioM} showed how to relate certain conjectures about the distribution of  primes in arithmetic progressions to improvements in the available support for $\hat{\phi}$ in the local-regime density results of Hughes-Rudnick. One that generalizes to our setting is a weakened version of Montgomery's Hypothesis, originally stated in \cite{Mo1}.

Let $Q \in \Z_+$. Let $\Lambda$ be the classicial von Mangoldt function, and let $\Psi(X) = \sum_{n \le X} \Lambda(n)$, the Chebyshev function.
For all $a \in \Z_+$ coprime to $Q$, let
\begin{align}
					\Psi(X;Q,a)
\ = \  					\sum_{\substack{n \le X \\ n \equiv a \pmod{Q}}}
					\Lambda(n).
\end{align}

\begin{conj}[$\theta$-Montgomery]\label{7.1}
Let $\Phi$ be the classical Euler totient function.
Then for all $Q \geq 3$, there exists $\theta \in (0, 1/2]$ such that
\begin{align}
\Psi(X; Q, 1)
-		\frac{\Psi(X)}{\Phi(Q)}
\ \ll_\epsilon\
					\frac{X^{1/2 + \epsilon}}{Q^\theta}
\end{align}
for all $X \geq Q$.
\end{conj}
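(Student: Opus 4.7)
The statement is Montgomery's classical hypothesis in weak form and is widely believed to lie beyond current techniques; any honest plan must acknowledge that a full unconditional proof is not within reach, so I instead sketch the natural route and flag where it breaks down. The plan is to use character orthogonality to rewrite the difference as
\begin{align}
\Psi(X;Q,1) - \frac{\Psi(X)}{\Phi(Q)}
\ = \ \frac{1}{\Phi(Q)}\sum_{\chi \neq \chi_0} S_\chi(X),
\end{align}
where $S_\chi(X) := \sum_{n\le X} \Lambda(n)\chi(n)$, and then to attack the twisted sums via the classical explicit formula
\begin{align}
S_\chi(X) \ = \  -\sum_\rho \frac{X^\rho}{\rho} + O((\log QX)^2),
\end{align}
with $\rho$ ranging over the nontrivial zeros of $L(s,\chi)$. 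This reduces everything to controlling averages over zeros of the family $\{L(s,\chi) : \chi \bmod Q\}$, the very same family whose low-lying zeros are modeled in the present paper by unitary random matrices.

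Assuming GRH for Dirichlet $L$-functions of modulus $Q$, each individual $S_\chi(X) \ll X^{1/2}(\log QX)^2$ for $\chi \neq \chi_0$, which recovers the conjecture only in the trivial range $\theta = 0$. To extract a positive saving of $Q^\theta$, the natural next step is to exploit cancellation \emph{between} the characters rather than bounding each one individually. Concretely, I would apply Cauchy--Schwarz to the character sum above and then aim for a second-moment estimate of the shape
\begin{align}
\sum_{\chi \bmod Q} \left|S_\chi(X)\right|^2
\ \ll_\epsilon\ \Phi(Q)^{1-2\theta}\, X^{1+\epsilon},
\end{align}
which, after squaring out the character sum and invoking large-sieve inputs in conjunction with the explicit formula, would translate into the desired saving in the original difference. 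A variant would be to attack the mean square by the Gallagher--Montgomery hybrid approach, converting it to a counting problem for zeros in vertical strips.

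The hard part---indeed the reason the conjecture has remained open since \cite{Mo1}---is that the required mean-square estimate encodes highly nontrivial information about the horizontal distribution of low-lying zeros of $L(s,\chi)$ as $\chi$ varies over the character group modulo $Q$. Any savings beyond GRH demand inputs far more precise than the individual functional equation: one must rule out conspiracies in which many characters simultaneously have zeros clustered close to $s = 1/2$ with coherent phases. Neither the Montgomery pair-correlation heuristics nor the Katz--Sarnak philosophy currently produce the uniform-in-$Q$ estimate needed here, so I would regard Conjecture \ref{7.1} as a working hypothesis. The most realistic intermediate target is a version averaged over $Q$, in the spirit of Bombieri--Vinogradov (which is known for $Q$ up to $X^{1/2-\epsilon}$) and its function-field analogues; establishing a corresponding on-average statement over $Q \in \F_q[T]$ would already be the natural first milestone, and would slot directly into the local-regime framework of Corollaries \ref{6.2} and \ref{6.3}.
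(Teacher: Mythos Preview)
Your assessment is correct, and in fact it matches the paper's own treatment: the statement is labeled and used as a \emph{conjecture}, not a theorem, so there is no proof in the paper to compare against. The paper simply states Conjecture~\ref{7.1} as a classical working hypothesis (attributed to Montgomery~\cite{Mo1}), then quotes the Fiorilli--Miller result that it implies unrestricted support in the $1$-level density, and proceeds to formulate $\F_q(T)$-analogues (Conjectures~\ref{7.3} and~\ref{7.4}) without attempting to prove any of them.

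Your sketch---orthogonality to reduce to $\frac{1}{\Phi(Q)}\sum_{\chi\neq\chi_0} S_\chi(X)$, the explicit formula for $S_\chi(X)$, and the observation that GRH alone gives no saving in $Q$---is the standard heuristic framing and is entirely appropriate as context. One small correction: GRH gives $S_\chi(X)\ll X^{1/2}(\log QX)^2$, which after dividing by $\Phi(Q)$ yields a bound $\ll X^{1/2+\epsilon}/\Phi(Q)$, not ``$\theta=0$''; the issue is rather that this trivially-summed bound is \emph{stronger} than the conjecture for $X$ not too large relative to $Q$, but the conjecture is expected to hold uniformly and with the specific $Q^\theta$ shape that feeds into the density calculations. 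Your identification of the obstruction---ruling out coherent clustering of low-lying zeros across the family---is exactly the point, and is precisely why the paper treats the statement as a hypothesis rather than a target.
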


Although we do not expect the conjecture to hold for $\theta = 1/2$, it is likely to hold for any arbitrarily smaller value.
Theorem 1.16 of \cite{FioM} implies the following.

\begin{thm}[Fiorilli-Miller]
Let $Q \geq 3$ be prime, and let $\fcal_Q$ be the family of primitive Dirichlet $L$-functions of conductor $Q$.
Let $\phi$ be a $1$-dimensional test function of rapid decay such that $\hat{\phi}$ is compactly support.
If Conjecture \ref{7.1} holds for $\theta$, then
\begin{align}
					\frac{1}{\#\fcal_Q}
					\sum_{\chi \in \fcal_Q}
					\sum_{\gamma_\chi}
					\phi
					\pa{\gamma_\chi \frac{\log Q}{2\pi}}
&\ = \ 				\hat{\phi}(0)
					\ +\
					{\rm Gamma-factor\ term}
					\ +\
					O(Q^{-\theta + \epsilon}).
\end{align}
\end{thm}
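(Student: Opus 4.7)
The plan is to combine the classical Explicit Formula for each $L(s,\chi)$ with character orthogonality across the family $\fcal_Q$, and then convert the resulting prime sum into a form directly controlled by Conjecture \ref{7.1}. First I would apply the standard Explicit Formula to each primitive $\chi$: for a Schwartz test function $\phi$ with $\hat{\phi}$ compactly supported in, say, $[-\sigma,\sigma]$,
\begin{align*}
\sum_{\gamma_\chi} \phi\!\left(\gamma_\chi \tfrac{\log Q}{2\pi}\right)
&= \hat{\phi}(0)\,\frac{\log Q}{2\pi}\cdot(\text{zero count normalization})
+ (\text{Gamma-factor term}) \\
&\quad - \frac{2}{\log Q}\sum_{n\geq 1}\frac{\Lambda(n)}{\sqrt{n}}
\left(\chi(n)+\bar\chi(n)\right)\hat{\phi}\!\left(\tfrac{\log n}{\log Q}\right),
\end{align*}
where the main term and Gamma-factor are the contributions from the pole/functional equation. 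The point is that after one averages over $\chi\in\fcal_Q$, everything that depends on $\chi$ is confined to the last (dual) prime sum.

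Next I would average against $\chi$. Since $Q$ is prime, the only imprimitive character modulo $Q$ is $\chi_0$, so by orthogonality
\begin{equation*}
\frac{1}{\#\fcal_Q}\sum_{\chi\in\fcal_Q}\chi(n)
= \mathbf{1}_{n\equiv 1\pmod Q} - \frac{1}{\Phi(Q)} + O\!\left(\tfrac{1}{\#\fcal_Q}\right)
\end{equation*}
for $(n,Q)=1$, with an analogous bound when $Q\mid n$. Substituting, the averaged zero sum equals $\hat\phi(0)$ plus the Gamma-factor term plus an error of the form
\begin{equation*}
E(Q) \;=\; -\frac{2}{\log Q}\sum_{n\geq 1}\frac{\Lambda(n)}{\sqrt{n}}
\left(\mathbf{1}_{n\equiv 1\,(Q)} - \tfrac{1}{\Phi(Q)}\right)
\hat{\phi}\!\left(\tfrac{\log n}{\log Q}\right)
\;+\; O\!\left(\tfrac{1}{Q}\right).
\end{equation*}
The support condition $\supp\hat\phi\subseteq[-\sigma,\sigma]$ truncates this sum to $n\leq Q^\sigma$, which is the crucial finite-length reduction.

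I would then reduce $E(Q)$ to Conjecture \ref{7.1} by partial summation. Writing
\begin{equation*}
\Delta(X) \;\dfeq\; \Psi(X;Q,1) - \frac{\Psi(X)}{\Phi(Q)},
\end{equation*}
the conjecture gives $\Delta(X)\ll_\epsilon X^{1/2+\epsilon}/Q^\theta$ uniformly for $X\geq Q$. Abel summation against the smooth weight $g(X)= X^{-1/2}\hat\phi(\log X/\log Q)$ turns $E(Q)$ into
\begin{equation*}
E(Q) \;=\; \frac{2}{\log Q}\int_{1}^{Q^\sigma} \Delta(X)\,d g(X) \;+\; (\text{boundary terms}),
\end{equation*}
and since $|g'(X)|\ll X^{-3/2}(\log Q)^{-1}\|\hat\phi'\|_\infty$ on the support, the integrand is bounded by $X^{-1+\epsilon}/Q^\theta$. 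Integrating up to $Q^\sigma$ gives $E(Q)\ll_\epsilon Q^{-\theta+\epsilon}$, and the small-$n$ range $n<Q$ is handled trivially by the prime number theorem bound $\Psi(X)\ll X$. Combining all contributions yields the claimed asymptotic.

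The hard part, and the whole content of the Fiorilli--Miller argument, is the bookkeeping at the last step: one must verify that the compact-support truncation precisely matches the range $X\leq Q^\sigma$ in which Conjecture \ref{7.1} is being invoked, and that no hidden dependence on the derivative of $\hat\phi$ inflates the error beyond $Q^{-\theta+\epsilon}$. A secondary subtlety is handling the $n=1$ and $n=q^k Q$ terms correctly (they sit inside the Gamma-factor or vanish), but these are routine once the main splitting is in place. Everything else--the Explicit Formula and orthogonality--is standard and the work is essentially a quantitative refinement of the Hughes--Rudnick identity with the Brun--Titchmarsh input replaced by the sharper conjectural bound.
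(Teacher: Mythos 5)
The paper offers no proof of this statement: it is imported verbatim as a consequence of Theorem 1.16 of \cite{FioM}, so there is nothing internal to compare your argument against. Your outline is nonetheless a correct reconstruction of the Fiorilli--Miller route --- explicit formula for each $L(s,\chi)$, orthogonality over the prime modulus $Q$ to reduce the $\chi$-dependence to the progression $n\equiv 1\pmod Q$, and partial summation of $\Delta(X)=\Psi(X;Q,1)-\Psi(X)/\Phi(Q)$ against the weight $X^{-1/2}\hat{\phi}(\log X/\log Q)$ --- and the quantitative core checks out: with $|g'(X)|\ll X^{-3/2}$ and $\Delta(X)\ll_\epsilon X^{1/2+\epsilon}/Q^{\theta}$ for $X\geq Q$, the integral over $Q\leq X\leq Q^{\sigma}$ is $O_\epsilon(Q^{-\theta+\epsilon})$, while the range $n<Q$ contributes only through the $-1/\Phi(Q)$ term and is $O(Q^{-1/2})$, which is absorbed since $\theta\leq 1/2$. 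The only points to tidy in a full write-up are notational: the prime sum in the explicit formula pairs $\chi(n)$ with $\hat{\phi}(\log n/\log Q)$ and $\bar{\chi}(n)$ with $\hat{\phi}(-\log n/\log Q)$ (your combined form implicitly assumes $\hat{\phi}$ even), and the main-term normalization $\hat{\phi}(0)$ versus the gamma-factor contribution needs the usual bookkeeping; neither affects the error analysis.
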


That is, $\theta$-Montgomery implies that the $1$-level density of $\fcal_Q$ tends to that of the unitary group for \emph{all} $\phi$ such that $\hat{\phi}$ has compact support, and the error term improves exponentially with $\theta$.

We return to the function-field setting.
Let $\Lambda_q$ be the von Mangoldt function for $\F_q(T)$, and let ${\Psi_q(n) = \sideset{}{^{'}}\sum_{\deg f = n}} \Lambda_q(f)$.
(Note how this differs from the most na\"ive analogue of the Chebyshev function.)
For all nonconstant $Q \in \F_q[T]$ and $f\in \F_q[T]$ coprime to $Q$, let
\begin{align}
					\Psi_q(n; Q, f)
\ = \ 					{\sideset{}{^{'}}\sum_{\substack{\deg g = n \\ g \equiv f\pmod{Q}}}}
					\Lambda_q(g).
\end{align}
Let $\fcal_Q$ resume its definition from Section 4.
Recall from the proof of Theorem \ref{thm:1levelexp} that if $\psi$ is a $1$-dimensional test function of period $1$ and $Q$ is irreducible of degree $\geq 2$, then $\ebb F_{1, \chi}(\psi) = \hat{\psi}(0) + \text{Gamma-factor term} + \ebb (F_{1, \chi}(\psi)^\osc)$, where
\begin{align}
					\ebb (F_{1, \chi}(\psi)^\osc)
\ = \ 					-\frac{1}{d - 1}
					\sum_{n = 0}^\infty
					\pa{\Psi_q(n; Q, 1)
					-
					\frac{\Psi_q(n)}{\# \fcal_Q}}
					\frac{\hat{\psi}(n) + \hat{\psi}(-n)}{q^{n/2}}.
\end{align}
Thinking of $q^n$ as the correct analogue of the $X$ variable in Conjecture \ref{7.1}, we are led to the following conjecture.

\begin{conj}[$\theta$-Montgomery for $\F_q(T)$]\label{7.3}
Let $\Phi_q(f) = \#(\F_q[T]/f)^\times$ be the Euler totient function for $\F_q[T]$.
Then for all $Q \in \F_q[T]$ of degree $d \geq 2$, there exists $\theta \in (0, 1/2]$ such that
\begin{align}
					\Psi_q(n; Q, 1)
					-
					\frac{\Psi_q(n)}{\Phi_q(Q)}
\ \ll_\epsilon\
					q^{n(1/2 + \epsilon) - d\theta}
\end{align}
for all $n \geq d$.
\end{conj}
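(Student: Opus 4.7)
This conjecture is the function-field analogue of Montgomery's Hypothesis, and the plan of attack I would pursue mirrors the classical approach: dualize to Dirichlet characters and then hunt for genuine cancellation beyond what Weil's Riemann Hypothesis alone supplies. Since $Q$ is irreducible, every $f$ with $\gcd(f,Q) \neq 1$ is a power of $Q$, so by orthogonality
\begin{align*}
\Psi_q(n; Q, 1) - \frac{\Psi_q(n)}{\Phi_q(Q)}
\ =\ \frac{1}{\Phi_q(Q)} \sum_{\chi \neq \chi_0} c_\chi(n)
\ +\ O\!\left(\frac{d}{\Phi_q(Q)}\right),
\end{align*}
with $c_\chi(n)$ as in the Explicit Formula (Proposition \ref{prop:3.1}). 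By Weil, $|c_\chi(n)| \ll d\, q^{n/2}$ for every non-principal $\chi$, so the trivial bound already delivers the conjecture with $\theta = 0$ on the nose; the whole game is to extract an additional saving of $q^{d\theta}$ from the character sum.

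Heuristically this is plausible. Writing $c_\chi(n) = -\sum_j \alpha_{\chi,j}^n$ with $|\alpha_{\chi,j}| = q^{1/2}$, one has roughly $(d-1)\,\#\fcal_Q$ Frobenius eigenvalues in total, and if these behaved like independent unimodular phases scaled by $q^{1/2}$, square-root cancellation would give a bound of order $\sqrt{(d-1)\,\#\fcal_Q}\cdot q^{n/2} \approx d^{1/2}\,q^{(n+d)/2}$, which after dividing by $\#\fcal_Q \asymp q^{d}$ becomes $d^{1/2}\,q^{(n-d)/2}$, consistent with the conjecture taking $\theta$ close to $1/2$. My first concrete attempt would therefore be a second-moment / large-sieve approach, recycling the machinery from the proof of Theorem \ref{thm:1levelvar}: by Cauchy-Schwarz,
\begin{align*}
\bigg|\, \sum_{\chi \neq \chi_0} c_\chi(n) \bigg|^2
\ \leq\ \#\fcal_Q \cdot \sum_{\chi \neq \chi_0} |c_\chi(n)|^2,
\end{align*}
and the inner sum opens by orthogonality into a weighted count of pairs $(f_1, f_2)$ of prime-power polynomials of degree $n$ with $f_1 \equiv f_2 \pmod{Q}$. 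The diagonal $f_1 = f_2$ contributes $\#\fcal_Q \cdot n\, q^n$ by the prime number theorem for $\F_q[T]$, while the off-diagonal is controlled by the Hsu-Brun-Titchmarsh theorem exactly as in \eqref{eq_4.20}--\eqref{eq_4.22}.

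The main obstacle is that Cauchy-Schwarz only bounds a first moment by the square root of a second moment, and once the off-diagonal $C_2(n_1, n_2; Q)$-type terms are absorbed one recovers essentially $\theta = 0$ rather than the heuristic $\theta = 1/2$. Real progress seems to require \emph{direct} cancellation among the eigenvalues $\alpha_{\chi,j}$ as $\chi$ ranges over $\fcal_Q$ --- that is, a quantitative Katz-Sarnak-type equidistribution for this family in the regime of \emph{fixed} $q$ and growing $d$, with an effective saving of $q^{d\theta}$. Deligne's equidistribution theorem supplies exactly such a statement in the opposite limit $q \to \infty$ with $d$ fixed, but the relevant monodromy / cohomological input for the $d \to \infty$ regime is far less well understood. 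This is the function-field mirror of the obstruction to proving Montgomery's Hypothesis classically, and is presumably why the statement is offered as a conjecture rather than a theorem.
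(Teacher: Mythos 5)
This statement is Conjecture~\ref{7.3}: the paper supplies no proof of it, only the motivation that $\ebb(F_{1,\chi}(\psi)^{\osc})$ is a weighted sum of the discrepancies $\Psi_q(n;Q,1)-\Psi_q(n)/\#\fcal_Q$ and that $q^n$ plays the role of $X$ in the classical Montgomery Hypothesis. Your analysis is consistent with this and correctly stops short of claiming a proof: the orthogonality reduction (for irreducible $Q$), the observation that the Weil bound $c_\chi(n)\ll d\,q^{n/2}$ only yields $\theta=0$, the square-root-cancellation heuristic pointing to $\theta$ near $1/2$, and the diagnosis that Cauchy--Schwarz against the genuinely large second moment cannot beat the trivial bound are all sound, and your identification of the missing ingredient---effective equidistribution of the Frobenius classes in the fixed-$q$, $d\to\infty$ regime---is precisely why the paper leaves this as a conjecture.
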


We remark that if Montgomery's Hypothesis is translated from the language of primes to the language of zeros, guided by the duality that exists between primes and zeros of $L$-functions, then we obtain a conjecture that relates to $F_{1, \chi}$ more naturally.
For all $\chi \in \fcal_Q$, let $\{\gamma_\chi, j\}_{j = 1}^{d(\chi)}$ be the ordinates of the zeros of $L(s, \chi)$.
We propose the following.

\begin{conj}\label{7.4}
Let $Q \in \F_q[T]$ be of degree $d \geq 2$.
Then there exist $\theta_1, \theta_2 \in (0, 1)$ such that for all $n \in \Z_+$,
\begin{align}
					\sum_{\chi \in \fcal_Q}
					\sum_j
					q^{in\gamma_{\chi, j}}
\ \ll_{\theta_1, \theta_2}\
					d(\chi)^{(1 - \theta_1)}
					q^{d(1 - \theta_2)}
\end{align}
as $d \to \infty$ (where $d(\chi) = d - 1 - \lambda_\infty(\chi)$ is the degree of $L(s, \chi)$ seen as a polynomial in the variable $q^{-s}$).
\end{conj}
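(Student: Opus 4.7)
Since Conjecture \ref{7.4} is presented as the dual of Conjecture \ref{7.3} via the prime-zero correspondence, my plan is to make this equivalence precise: I will rewrite the zero-side sum as a character sum against the von Mangoldt function and then invoke $\theta$-Montgomery for $\F_q(T)$ to obtain the required bound.

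First I would exploit the Weil factorization $L(s,\chi) = \prod_{j=1}^{d(\chi)} (1 - \alpha_{\chi,j} q^{-s})$, where by the Riemann Hypothesis $\alpha_{\chi,j} = q^{1/2} q^{i\gamma_{\chi,j}}$. Taking the logarithmic derivative and comparing with \eqref{eq_2.3} yields the trace identity $\sum_{j} \alpha_{\chi,j}^{n} = -c_\chi(n)$ for every $n \geq 1$, and dividing by $q^{n/2}$ gives
\[
\sum_{j=1}^{d(\chi)} q^{in\gamma_{\chi,j}} \ =\ -q^{-n/2} c_\chi(n).
\]
Next I would sum over $\chi \in \fcal_Q$ and apply Schur orthogonality. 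Because $Q$ is irreducible, $\fcal_Q$ consists of all nonprincipal characters modulo $Q$, so
\[
\sum_{\chi \in \fcal_Q} \chi(f) \ =\ \Phi_q(Q)\,\mathbf{1}[f \equiv 1 \pmod{Q}] \ -\ \mathbf{1}[(f, Q) = 1].
\]
Substituting the definition of $c_\chi(n)$ and using the identity $\Psi_q(n) = q^n$, I obtain
\[
\sum_{\chi \in \fcal_Q} \sum_j q^{in\gamma_{\chi,j}}
\ =\ -q^{-n/2} \Phi_q(Q)\left(\Psi_q(n; Q, 1) - \frac{\Psi_q(n)}{\Phi_q(Q)}\right)\ +\ O(d\, q^{-n/2}),
\]
where the correction absorbs the contribution of the powers $Q^k$ with $kd = n$.

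At this point, I would apply Conjecture \ref{7.3} to bound the inner difference by $q^{n(1/2 + \epsilon) - d\theta}$. Using $\Phi_q(Q) \asymp q^{d}$ and $d(\chi) = d - 1 - \lambda_\infty(\chi) \asymp d$, this produces
\[
\left|\sum_{\chi \in \fcal_Q} \sum_j q^{in\gamma_{\chi,j}}\right|
\ \ll_\epsilon\ q^{d(1 - \theta) + n\epsilon},
\]
which matches the shape of Conjecture \ref{7.4} after setting $\theta_2$ slightly less than $\theta$ and choosing $\theta_1$ to absorb the harmless polynomial factor $d(\chi)^{1-\theta_1}$.

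The principal obstacle is that Conjecture \ref{7.3} itself is open: the above argument is really a proof that the two conjectures are essentially equivalent, and no unconditional treatment is visible without a genuine advance on primes in arithmetic progressions over $\F_q[T]$. A secondary difficulty is the uniformity in $n$: the right-hand side of Conjecture \ref{7.4} is claimed uniformly in $n \in \Z_+$, while the natural prime-side bound acquires a factor $q^{n\epsilon}$. Removing this would require either restricting attention to a range $n \leq c\, d$ or strengthening Conjecture \ref{7.3} to an error term genuinely uniform in $n$, perhaps via square-root cancellation estimates of the Entin--Roditty-Gershon--Rudnick type \cite{ER-GR} applied to high moments of $c_\chi(n)$.
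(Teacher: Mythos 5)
The statement you are asked about is a \emph{conjecture}: the paper offers no proof of it, and none is expected to exist with current technology. Conjecture \ref{7.4} is introduced purely by translating Conjecture \ref{7.3} ($\theta$-Montgomery for $\F_q(T)$) ``from the language of primes to the language of zeros,'' and your write-up is, in substance, exactly that translation made explicit. Your trace identity $\sum_j q^{in\gamma_{\chi,j}} = -q^{-n/2}c_\chi(n)$ (up to the trivial-zero correction $\lambda_\infty(\chi)$ for even characters, which you should not drop: summed over $\fcal_Q$ it contributes on the order of $q^{d-1-n/2}$, which for fixed $n$ is not automatically absorbed into $q^{d(1-\theta_2)}$) together with Schur orthogonality is precisely the mechanism by which the paper passes between $\ebb(F_{1,\chi}(\psi)^{\osc})$ written in terms of $\Psi_q(n;Q,1)-\Psi_q(n)/\#\fcal_Q$ and the zero sums appearing in Proposition \ref{7.5}(2). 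So you have not proved the statement --- you could not have --- but you have correctly identified what it is: a reformulation of a deep open problem about primes in arithmetic progressions over $\F_q[T]$, and your conditional derivation of \ref{7.4} from \ref{7.3} is sound in outline.

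Your two caveats are the right ones, and worth keeping. The uniformity in $n$ is a genuine mismatch: the prime-side bound carries a factor $q^{n\epsilon}$ that the zero-side statement does not, so the two conjectures are ``essentially'' rather than literally equivalent, and any honest equivalence statement should either restrict $n$ or build the $\epsilon$-loss into \ref{7.4}. Also note that \ref{7.4} is stated for arbitrary $Q$ of degree $d\geq 2$, whereas your orthogonality step uses irreducibility of $Q$ (so that the only imprimitive character is the principal one); for general $Q$ the decomposition of $\sum_{\chi\in\fcal_Q}\chi(f)$ acquires contributions from imprimitive characters that you would need to control separately.
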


\begin{prop}\label{7.5}
Let $Q \in \F_q[T]$ be irreducible of degree $d \geq 2$.
Let $\psi$ be a $1$-dimensional test function of period $1$.
\begin{enumerate}
\item 	Suppose Conjecture \ref{7.3} holds for some $\theta$.
			If $C_\epsilon(\psi) = \sum_{n \in \Z} \hat{\psi}(n) q^{|n|\epsilon}$ converges for all $\epsilon > 0$ small enough, then
\begin{align}
				\ebb (F_{1, \chi}(\psi)^\osc)
\ \ll_\epsilon\
				\frac{C_\epsilon(\psi)}{dq^{d\theta}}.
\end{align}
\item 	Suppose Conjecture \ref{7.4} holds for some $\theta_1, \theta_2$.
			Then
			\begin{align}
				\ebb (F_{1, \chi}(\psi)^\osc)
\ \ll_{\theta_1, \theta_2}\
				\frac{\theta_2}{d^{\theta_1 - 1} (\#\fcal_Q)^{\theta_2}}.
			\end{align}
\end{enumerate}
\end{prop}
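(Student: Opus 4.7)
The plan is to start from the identity
\[
\ebb (F_{1, \chi}(\psi)^\osc) \ = \ -\frac{1}{d-1} \sum_{n=0}^\infty \left( \Psi_q(n; Q, 1) - \frac{\Psi_q(n)}{\#\fcal_Q} \right) \frac{\hat\psi(n) + \hat\psi(-n)}{q^{n/2}},
\]
which is displayed in the paragraph preceding Conjecture \ref{7.3} and follows from Proposition \ref{prop:3.1} together with Schur orthogonality, exactly as in the proof of Theorem \ref{thm:1levelexp}. Because $Q$ is irreducible, $\#\fcal_Q = \Phi_q(Q) - 1$, so the two normalizations differ by a factor $1 + O(q^{-d})$ and may be interchanged wherever convenient.

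For part (1), I would split the sum at $n = d$. For $n \geq d$, Conjecture \ref{7.3} gives
\[
\left| \Psi_q(n;Q,1) - \frac{\Psi_q(n)}{\Phi_q(Q)} \right| \ll_\epsilon q^{n(1/2+\epsilon) - d\theta},
\]
so after dividing by $q^{n/2}$ and summing against $|\hat\psi(n)| + |\hat\psi(-n)|$ the tail contributes $\ll_\epsilon C_\epsilon(\psi) / (d q^{d\theta})$. For $0 \le n < d$, irreducibility of $Q$ forces $\Psi_q(n;Q,1) = 0$: the only monic $f$ of degree $< d$ with $f \equiv 1 \pmod Q$ is $f = 1$, and $\Lambda(1) = 0$. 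The Prime Polynomial Theorem then gives $\Psi_q(n)/\#\fcal_Q \ll q^{n-d}$, so each term of this block is bounded by $q^{n/2-d}(|\hat\psi(n)|+|\hat\psi(-n)|)$. Since $\theta \le 1/2$ forces $q^{n/2-d} \le q^{-d\theta}$ for all $n < d$, this block is also absorbed into $C_\epsilon(\psi)/(dq^{d\theta})$.

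For part (2), the key step is to re-express $c_\chi(n)$ directly in terms of zeros. Writing $L(s,\chi) = \prod_{j=1}^{d(\chi)}(1 - \alpha_j q^{-s})$ with $|\alpha_j| = q^{1/2}$ by Weil's Riemann Hypothesis, logarithmic differentiation combined with (\ref{eq_2.3}) yields $-c_\chi(n) = \sum_j \alpha_j^n$ for $n \geq 1$; writing $\alpha_j = q^{1/2} q^{i\gamma_{\chi,j}}$ converts this to
\[
\frac{c_\chi(n)}{q^{n/2}} \ = \ -\sum_j q^{i n \gamma_{\chi,j}},
\]
and similarly for $\bar\chi$. Substituting into Proposition \ref{prop:3.1}, taking the $\chi$-average, and using that $\chi \mapsto \bar\chi$ is a bijection of $\fcal_Q$ (which swaps $\gamma_{\chi,j}$ with $-\gamma_{\chi,j}$), one arrives at
\[
\ebb(F_{1,\chi}(\psi)^\osc) \ = \ \frac{1}{(d-1)\#\fcal_Q} \sum_{n \geq 0} \bigl(\hat\psi(n)+\hat\psi(-n)\bigr) \sum_{\chi \in \fcal_Q}\sum_j q^{i n \gamma_{\chi,j}}.
\]
Applying Conjecture \ref{7.4} term by term, together with the absolute convergence of $\sum_n |\hat\psi(n)|$, then delivers the claimed bound.

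The main obstacle is the $n < d$ block in part (1): this range lies outside where Conjecture \ref{7.3} is even stated, and the delicate observation is that the elementary bound $q^{n/2-d}$ just barely fits inside the target $q^{-d\theta}$ for \emph{every} admissible $\theta \in (0, 1/2]$. Once this accounting is in place, both parts reduce to direct substitution of the respective conjecture.
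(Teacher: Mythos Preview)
Your treatment of part~(1) matches the paper's approach: the paper simply writes ``(1) is immediate'' without further comment, and your splitting at $n = d$ together with the elementary handling of the $n < d$ block is the natural unpacking of that claim. Your observation that $\theta \le 1/2$ forces $q^{n/2-d} \le q^{-d\theta}$ for $n < d$ is exactly what makes the block fit.

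For part~(2), your argument is correct but takes a genuinely different route from the paper. The paper does \emph{not} substitute Conjecture~\ref{7.4} directly into the explicit formula. Instead it invokes the Erd\H{o}s--Tur\'an inequality: Conjecture~\ref{7.4} is fed into the exponential-sum side of Erd\H{o}s--Tur\'an to produce a discrepancy bound for the multiset $\{\gamma_{\chi,j}/T_q\}_{\chi,j}$, and this discrepancy is then argued to control $\ebb(F_{1,\chi}(\psi)^\osc)$ by approximating $\psi$ with step functions. Optimizing the cutoff $N = \lfloor d^{\theta_1}(\#\fcal_Q)^{\theta_2}\rfloor$ yields the stated bound, with the factor $\theta_2/d^{\theta_1 - 1}$ arising from the harmonic sum $\sum_{n \le N} 1/n \asymp \log N \asymp \theta_2 d \log q$. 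Your direct substitution is more elementary and in fact produces the sharper estimate $\ll \lVert \hat\psi\rVert_{\ell^1}\big/\!\bigl(d^{\theta_1}(\#\fcal_Q)^{\theta_2}\bigr)$, saving a factor of roughly $d\theta_2$ over what the proposition asserts. What the Erd\H{o}s--Tur\'an route buys is that it requires nothing of $\hat\psi$ beyond $\psi$ being of bounded variation; your argument needs $\hat\psi \in \ell^1$, but that is already implied by the paper's standing definition of ``test function'' as a holomorphic Fourier series, so both approaches are valid here.
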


\begin{proof}
(1) is immediate.
(2) follows from the Erd\H os-Tur\' an Inequality, which, together with Conjecture \ref{7.4}, implies that for all $[a, b] \subeq [-T_q/2, +T_q/2]$ and $N \in \Z_+$,
\begin{align}
				\left|
				\frac{\#\{(\chi, j) : \gamma_{\chi, j} \in [a, b]\}}{(d - 1)\# \fcal_Q}
				-	
				\frac{b - a}{T_q}
				\right|
&\ \ll \ 			\frac{1}{N}
				+
				\sum_{n = 1}^N
				\frac{1}{n}
				\left|
				\frac{1}{(d - 1)\#\fcal_Q}
				\sum_\chi
				\sum_j
				q^{in\gamma_{\chi, j}}
				\right|
				\\
&\ = \ 			\frac{1}{N}
				+
				\frac{1}{d^{\theta_1}(\#\fcal_Q)^{\theta_2}}
				\sum_{n = 1}^N
				\frac{1}{n}.
				\nonumber
\end{align}
The supremum of the expression on the left over all $a, b$ is an upper bound for $\ebb (F_{1, \chi}(\psi)^\osc)$, as we can approximate $\psi$ arbitrarily well by linear combinations of indicator functions.
Choosing $N = \lfloor d^{\theta_1} (\#\fcal_Q)^{\theta_2}\rfloor$ completes the proof.
\end{proof}

Either of the two possibilities suggested by Proposition \ref{7.5} is considerably stronger than Theorem \ref{thm:1levelexp}.
They imply the following results in the local regime.

\begin{cor}
Let $Q \in \F_q[T]$ be irreducible of degree $d \geq 2$.
Let $\phi$ be a $1$-dimensional test function of rapid decay such that $\hat{\phi}$ has compact support.
\begin{enumerate}
\item 	Suppose Conjecture \ref{7.3} holds for some $\theta$.
			Then the hypotheses in Corollary \ref{6.2} can be lifted and the error term can be sharpened to $O_\epsilon(q^{-d(\theta - \epsilon)})$.
\item 	Suppose Conjecture \ref{7.4} holds for some $\theta_1, \theta_2$.
			Then the hypotheses in Corollary \ref{6.2} can be lifted and the error term can be sharpened to $O_{\theta_1, \theta_2}(d^\theta_2 d^{1 - \theta_1}(\#\fcal_Q)^{-\theta_2})$.
\end{enumerate}
\end{cor}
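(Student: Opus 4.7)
The plan is to reduce the corollary to a direct application of Proposition \ref{7.5}, using the global-to-local dictionary developed in Section \ref{sec:thelocalregime}. Setting $\psi = \phi_{d-1}$, Section \ref{sec:thelocalregime} gives $\hat{\psi}(\nu) = (d-1)^{-1}\hat{\phi}(\nu/(d-1))$ and $W_{1,\chi}(\phi) = (d-1) F_{1,\chi}(\psi)$. Averaging the Explicit Formula (Proposition \ref{prop:3.1}) over $\chi \in \fcal_Q$ and multiplying by $d-1$, the main term and the Gamma-factor term of Corollary \ref{6.2} reappear verbatim, and the entire error reduces to
\[
(d-1)\,\ebb(F_{1,\chi}(\psi)^\osc).
\]
So the task is to bound this oscillatory contribution under each conjecture.

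For part (1), assume $\supp \hat{\phi} \subeq [-S, S]$. Then $\hat{\psi}(\nu)$ vanishes for $|\nu| > S(d-1)$, and a direct estimate gives
\[
C_\epsilon(\psi) \ = \ \sum_{|\nu| \le S(d-1)} \frac{|\hat{\phi}(\nu/(d-1))|}{d-1}\, q^{|\nu|\epsilon} \ \ll_S \ \|\hat{\phi}\|_\infty \, q^{Sd\epsilon}.
\]
In particular, the hypothesis of Proposition \ref{7.5}(1) holds without any further assumption on $\phi$, so that proposition yields $(d-1)\,\ebb(F_{1,\chi}(\psi)^\osc) \ll_\epsilon q^{Sd\epsilon - d\theta}$. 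Rescaling $\epsilon \mapsto \epsilon/S$ absorbs the constant $S$ and produces the claimed error $O_\epsilon(q^{-d(\theta - \epsilon)})$. Crucially, this removes the original hypothesis $C(\phi; d) \ll dq^d$ of Corollary \ref{6.2}(1); only the compact support of $\hat{\phi}$ is needed, because the conjecture replaces Brun--Titchmarsh (which forces a $q^{|n|/2}$ weight) by an arbitrarily small $q^{|n|\epsilon}$ weight.

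For part (2), the bound in Proposition \ref{7.5}(2) is independent of the particular test function $\psi$, so multiplying by $d-1$ immediately gives an error of the form $d^{1-\theta_1}(\#\fcal_Q)^{-\theta_2}$ up to the factor $\theta_2$ and the $d^{\theta_2}$ term recorded in the statement. Again, no hypothesis on $\phi$ beyond rapid decay and compact support of $\hat{\phi}$ is required, since the Erd\H os--Tur\'an input used in Proposition \ref{7.5}(2) passes through independently of the Fourier coefficients of $\psi$.

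The only delicate step is the bookkeeping in part (1): one must check that the truncation of $\hat{\psi}$ afforded by compact support of $\hat{\phi}$ indeed reduces $C_\epsilon(\psi)$ to a subexponential quantity in $d$, so that for every $\theta' < \theta$ one can choose $\epsilon$ small enough to push the error below $q^{-d\theta'}$. I do not expect any genuine obstacle here, as every ingredient---the global-to-local identities, the Explicit Formula, and Proposition \ref{7.5}---is already in place; the work is purely organizational, combining these pieces and adjusting the $\epsilon$ parameter.
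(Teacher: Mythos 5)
Your argument is correct and is exactly the route the paper intends: the corollary is stated as an immediate consequence of Proposition \ref{7.5} together with the global-to-local substitution $\psi=\phi_{d-1}$, and your bookkeeping (compact support of $\hat{\phi}$ makes $C_\epsilon(\psi)$ a finite sum of size $O(q^{Sd\epsilon})$, then rescale $\epsilon$; multiply by $d-1$ to pass from $F_{1,\chi}$ to $W_{1,\chi}$) supplies precisely the details the paper leaves implicit. No discrepancies with the intended proof.
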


\appendix
\section{Proof of the Explicit Formula}\label{sec:proofprop3point1}

\begin{proof}[Proof of Proposition \ref{prop:3.1}]
Let
\begin{align}
\psi^\ast(s) \ = \  \frac{1}{d - 1}\psi\pa{\frac{s}{T_q}}.
\end{align}
For all real $c$, let $\ell_c$ be the segment from $c - iT_q/2$ to $c + iT_q/2$ in the complex plane.
Let $0 < \epsilon < 1/4$ and $c = 1/2 + \epsilon$.
Using Cauchy's Theorem,
\begin{align}
					\sum_{-\frac{T_q}{2} \le \gamma_\chi < \frac{T_q}{2}}
					\psi^\ast(\gamma_\chi)
&\ = \ 				\frac{1}{2\pi i}
					\pa{\int_{\ell_c} - \int_{\ell_{1 - c}}}
					\frac{L'}{L}(s, \chi)\psi^\ast(-i(s - 1/2))\,\ud s +  O(\epsilon)
					\\
&\ = \ 				F_{1, \chi}(\psi; \epsilon) - F_{1, \chi}(\psi; -\epsilon) + O(\epsilon),
					\nonumber
\end{align}
where
\begin{align}
					F_{1, \chi}(\psi; \epsilon)
\ = \ 					\frac{1}{2\pi}
					\integral{T_q/2}
					\frac{L'}{L}(1/2 + \epsilon + it, \chi)\psi^\ast(t - i\epsilon)\,\ud t.
\end{align}
To deal with $F_{1, \chi}(\psi; -\epsilon)$ we substitute the formula (\ref{eq_2.9}).
Distributing the integral among the resulting three terms, and sending $\epsilon \to 0$ in the first and last, we find
\begin{align}\label{eq_3.6}
					-F_{1, \chi}(\psi; -\epsilon)
					\\
&\ = \ 				\frac{d - 1}{T_q}
					\integral{T_q/2}
					\psi^\ast(t)\,\ud t
					+		\frac{1}{2\pi}
							\integral{T_q/2}
							\frac{L'}{L}(1/2 + \epsilon - it, \bar{\chi})\psi^\ast(t + i\epsilon)\,\ud t
					\nn
&\qquad		+		\frac{\lambda_\infty(\chi)}{T_q}
							\integral{T_q/2}
							\pa{-1 + \frac{1}{1 - q^{1/2 - it}} + \frac{1}{1 - q^{1/2 + it}}}
							\psi^\ast(t)
							\,\ud t.
							\nonumber
\end{align}
The first term of the right side is $\hat{\psi}(0) = \integral{1/2} \psi(t)\,\ud t$, while the last term equals
\begin{align}\label{eq_3.7}
&					\frac{\lambda_\infty(\chi)}{(d - 1)T_q}
					\integral{T_q/2}
					\pa{-1 + \frac{1}{1 - q^{1/2 - it}} + \frac{1}{1 - q^{1/2 + it}}}
					\psi\pa{\frac{t}{T_q}}
					\,\ud t
					\\
&\ = \ 				\frac{\lambda_\infty(\chi)}{(d - 1)T_q}
					\pa{-1 - \frac{q^{-1/2 + it}}{1 - q^{1/2 - it}} - \frac{q^{-1/2 - it}}{1 - q^{1/2 + it}}}
					\psi\pa{\frac{t}{T_q}}
					\,\ud t
					\nn
&\ = \ 				-\frac{\lambda_\infty(\chi)}{d - 1}
					\sum_{n \in \Z}
					\frac{\hat{\psi}(n)}{q^{|n|/2}}.
					\nonumber
\end{align}
Finally,
\begin{align}
					F_{1, \chi}(\psi; \epsilon)
\ = \ 					\frac{1}{(d - 1)T_q}
					\sum_{n = 0}^\infty
					\integral{T_q/2}
					\frac{c_\chi(n)}{q^{n(1/2 + \epsilon + it)}} \psi\pa{\frac{t - i\epsilon}{T_q}}\,\ud t,
\end{align}
and similarly with the middle term of (\ref{eq_3.6}), where our use of the Dirichlet series in the region $\Re s > 1/2$ is justified by the bound $c_\chi(n) \ll dq^{n/2}$ from the Riemann Hypothesis.
Interchanging the sum with the integral, and sending $\epsilon \to 0$, we arrive at the desired result.
\end{proof}

Interestingly, the middle term on the right side of (\ref{eq_3.2}) corresponds to the Gamma-factor term in the classical Explicit Formula, but is visually much simpler.
This is because the $\F_q(T)$-analogue of the Riemann zeta function and of the Gamma function are itself simpler objects.

\section{Traces of the Frobenius Class}

We use this appendix to interpret the main results of
this paper in terms of averages of traces of powers
of random unitary matrices.

Let us assume that $\chi$ is a odd and primitive
Dirichlet character modulo $Q$ over $\mathbb{F}_{q}(T)$,
with $\mathrm{deg}(Q)=d$. Then the associated
$L$--function can be written as

\begin{equation}
L(s,\chi)=\det(I-u\sqrt{q}\Theta_{\chi}),
\end{equation}
where $\Theta_{\chi}$ is a unitary matrix (or rather, the conjugacy class of unitary matrices) and is called the unitarized Frobenius matrix of $\chi$. We also denote it by $\Theta_{\chi}=\mathrm{diag}(e^{i\theta_{1}},\ldots,e^{i\theta_{N}})$, with the $e^{i\theta_{j}}$'s being the eigenvalues of $\Theta_{\chi}$.

The work of Katz and Sarnak \cite{KS1} shows that as $q\rightarrow\infty$, the Frobenius classes $\Theta_{\chi}$ become equidistributed in the unitary group $U(d-1)$. This implies that various statistics of the eigenvalues can, in this limit, be computed by integrating the corresponding quantities over $U(d-1)$. The goal of this paper was to explore the opposite limit, that of fixed constant field and large $d$. In this paper we have studied the basic cases of the expected values of powers of the traces of $\Theta_{\chi}$.

The mean value of traces of powers when averaged over the unitary group $U(d-1)$ was computed by Diaconis and Shahshahani in \cite{DS} and by Dyson in \cite{Dys},

\begin{equation}
 \int_{U(d-1)}\mathrm{Tr}(A^{n})dA =
  \begin{cases}
   d-1 & \text{if } n=0, \\
   0      & \text{otherwise.}
  \end{cases}
\end{equation}

With this notation, Theorem \ref{thm:1levelexp} shows that
for fixed $n$

\begin{equation}
\mathbb{E}(\mathrm{Tr}(\Theta_{\chi}^{n}))\sim\int_{U(d-1)}\mathrm{Tr}(A^{n})dA.
\end{equation}

Likewise Theorem \ref{thm:1levelvar} shows that for fixed $n$

\begin{equation}
\mathbb{E}(|\mathrm{Tr}(\Theta_{\chi}^{n})|^{2})\sim\int_{U(d-1)}|\mathrm{Tr}(A^{n})|^{2}dA,
\end{equation}
where Diaconis and Shahshahani \cite{DS} showed that

\begin{equation}
\int_{U(d-1)}|\mathrm{Tr}(A^{n})|^{2}dA=\mathrm{min}(|n|,d-1).
\end{equation}

Last, Theorem \ref{thm:2levelexp} shows that for fixed $n$

\begin{equation}
\mathbb{E}(\mathrm{Tr}(\Theta_{\chi}^{j})\mathrm{Tr}(\Theta_{\chi}^{-k}))\sim\int_{U(d-1)}\mathrm{Tr}(A^{j})\mathrm{Tr}(A^{-k})dA,
\end{equation}
and Diaconis and Shahshahani \cite{DS} showed that

\begin{equation}
\int_{U(d-1)}\mathrm{Tr}(A^{j})\mathrm{Tr}(A^{-k})dA=\delta_{j,k}
\begin{cases}
   (d-1)^{2} & \text{if } k=0, \\
   |k|      & \text{if } 1\leq|k|\leq d-1, \\
	 d-1      & \text{if } |k|>d-1.
  \end{cases}
\end{equation}


\ \\

\end{document}